\documentclass[journal,twoside,web]{ieeecolor}
\usepackage{generic}

\usepackage{graphicx}      

\usepackage{amsmath}
\usepackage{amssymb}
\usepackage{bbm}
\usepackage{subcaption}
\usepackage{dsfont}
\usepackage{algpseudocode}
\usepackage{macros}

\usepackage[utf8]{inputenc}
\usepackage[english]{babel}

\newcommand{\subparagraph}{}
\usepackage{titlesec}

\titlespacing{\section}{0pt}{4pt plus 0pt minus 1pt}{0pt plus 0pt minus 1pt}
\titlespacing{\subsection}{0pt}{4pt plus 0pt minus 1pt}{0pt plus 0pt minus 1pt}
\usepackage[font=small,skip=2pt]{caption}
\newtheorem{theorem}{Theorem}
\newtheorem{lemma}{Lemma}

\newtheorem{assumption}{Assumption}
\newtheorem{remark}{Remark}

\begin{document}
	
	\title{On Convergence Analysis of Network-GIANT: An approximate Hessian-based fully distributed optimization algorithm}
    \author{Souvik Das, Luca Schenato, \IEEEmembership{Fellow,IEEE}, and Subhrakanti Dey, \IEEEmembership{Fellow, IEEE}
\thanks{This publication has emanated from research conducted with the financial support of Swedish Research Council (VR) Grant 2023-04232.}
\thanks{Souvik Das and Subhrakanti Dey are with the Department of Electrical Engineering, Uppsala University, Sweden.
(e-mail: \{souvik.das, subhrakanti.dey\}@angstrom.uu.se).}
\thanks{Luca Schenato is with the Department of Information Engineering, University of Padova, Italy. 
e-mail: (schenato@dei.unipd.it).}
}


\maketitle
\thispagestyle{empty}

	
	\begin{abstract}
        This paper presents a detailed convergence and performance analysis of a recently developed approximate Newton-type fully distributed optimization method for \(L\)-smooth, \(\mu\)-strongly convex local loss functions, called Network-GIANT (inspired by the Federated learning algorithm GIANT possessing mixed linear-quadratic convergence properties). Network-GIANT has been empirically seen to achieve faster linear convergence properties compared to its gradient-based counterparts, and several other existing second order distributed algorithms, while having the same communication complexity (per iteration) as its first order distributed counterparts. We first explicitly characterize a \emph{global linear convergence rate} for Network-GIANT, which can be computed as the spectral radius of a $3 \times 3$ matrix dependent on $L$, $\mu$, and the spectral norm ($\sigma$) of the consensus matrix of the underlying undirected graph. We provide an explicit bound on the step size parameter 
        $\eta$, below which this spectral radius is guaranteed to be less than $1$. Furthermore, we derive a mixed 
        linear-quadratic inequality based upper bound for the optimality gap norm, and provide a rigorous proof of a local asymptotic convergence 
        rate of \(1 - \eta \big(1 - \frac{\gamma}{\mu}\big)\) given the Hessian approximation error $\gamma < \mu$, which formally explains the faster convergence rate of Network-GIANT. Numerical experiments are carried out with a reduced CovType dataset for binary logistic regression over a variety of graphs, including heterogeneous data distributions, to illustrate the above theoretical results.
	\end{abstract}
	
\section{Introduction}
\label{sec:intro}
Distributed optimization and learning generally refer to the paradigm where multiple agents optimize a global cost function or train a global model collaboratively, often by optimizing their local cost functions or training a local model based on their local data sets. In applications such as networked autonomous systems, Internet of Things (IoT), collaborative robotics, and smart manufacturing, moving large amounts of locally collected data to a central processing unit can be expensive from both a communication and storage point of view. Additionally, distributed optimization/learning prevents the sharing of raw data, thus guaranteeing data privacy to a certain extent. Distributed/decentralized optimization or learning can occur over a network consisting of a central server with multiple local nodes (Federated Learning), or over a fully distributed setting without a central server, where nodes only communicate with their neighbours. There has been significant progress in recent years in both settings - see \cite{ref:LY-ZW-LS-PSY-CGB-24,ref:CL-NB-WH-YS-KHJ-24, yang_dist_opt_survey} for an extensive survey on these topics focusing on their key challenges and opportunities. 

In this work, we will primarily focus on the topic of fully distributed optimization, where nodes collaboratively optimize a global cost, which is a sum of local cost functions. We will also restrict ourselves to strongly convex cost functions.
In this context, distributed optimization or learning algorithms proceed in an iterative fashion, where each node updates its local estimate of the global optimization variable based on consensus-type averaging of its own information and the information received from its neighbours, combined with (typically) a gradient-descent type algorithm with a suitably chosen step size (constant or diminishing with time).  Without going into a detailed literature survey of gradient based distributed optimization algorithms, we refer the readers to \cite{nedic_annual_review, dist_grad_ML}. For strongly convex functions, it is well known that gradient based techniques achieve linear (exponential) convergence with a constant step size. Replicating this in a fully distributed setting requires a gradient-tracking algorithm where each node keeps a vector that keeps a local estimate of the global gradient \cite{li_na_gradient_tracking}. A detailed convergence analysis was carried out in \cite{li_na_gradient_tracking} to illustrate linear convergence of gradient-tracking based distributed first order optimization algorithms for smooth and strongly convex functions over undirected graphs, whereas a sublinear convergence rate $O(1/k)$ was guaranteed for smooth convex functions that are not strongly convex. A gradient-tracking based distributed first-order optimization with an additional distributed eigenvector tracking was presented in \cite{usman_directed}, with a provable linear convergence rate.

It is well known that for strongly convex and smooth functions, centralized optimization algorithms using curvature (or Hessian) information of the cost function, such as the Newton-Raphson method, achieve faster locally quadratic convergence in the vicinity of the optimum. In the context of fully distributed optimization algorithms utilizing Newton-type methods, achieving quadratic convergence is difficult, primarily due to the fact that communicating local Hessian information to neighbouring nodes is infeasible for large model size or parameter dimension, and the underlying consensus algorithm essentially slows down the convergence to at best a linear rate. Computation of the Hessian and its inversion is another bottleneck, which has motivated a plethora of recent approximate Newton-type {\em Federated learning} algorithms \cite{ref:CTD-NHT-TDN-WB-ARB-BBZ-AYZ-22,ref:GS-SD-22, ref:MS-RI-XQ-PR-22, ref:MS-AKQ-KMC-24}, which can also display a mixed linear-quadratic convergence rate. In the fully distributed regime, earlier examples of second order optimization algorithms  include \cite{ref:AM-QL-AR-16,ref:DB-DJ-NK-NJ-17,ref:NB-RC-GN-LC-DV-18, ref:JZ-QL-MCA-21} that approximate the Hessian based on Taylor series expansion and historical data. More recent works, such as  \cite{li2020communication},  developed a fully distributed version of DANE \cite{ref:OS-NS-TZ-14} (termed as Network-DANE) to overcome the high computational costs of the existing algorithms, and \cite{ref:ZQ-XL-LL-YH-23} captured the second order information of the objective functions based on an augmented Lagrangian function along with gradient diffusion technique to come up with a distributed algorithm leveraging the Hessian information of the objective function. Recent papers such as  \cite{maritan2024fully} and \cite{zhang_you_adaptive_newton} have shown, however, that the superlinear convergence properties of Federated second order optimization algorithms such as GIANT can be retrieved only when exact averages of gradients or parameters can  be computed via finite-time exact consensus 
\cite{maritan2024fully}, or through distributed finite-time set consensus \cite{zhang_you_adaptive_newton}, both of which can require up to $O(N)$ (where $N$ is the number of nodes) consensus rounds between two successive optimization iterations, thus making them infeasible for large networks.

In this article, we consider a recently developed {\em fully distributed approximate Newton-type optimization algorithm} called \(\ngiant\) \cite{alessio_net_giant}, which combined gradient tracking with consensus based updates and a descent direction computed by the product of the inverse local Hessian and the gradient tracking vector at each node. Built on an extension of the approximate Newton-type federated learning algorithm \(\giant\)  \cite{wang2018giant}, it was shown that \(\ngiant\)  achieves semi-global exponential convergence to the exact optimal solution for a sufficiently small step-size, and enjoys a low communication overhead of $O(n)$  per iteration at each node, where \(n\) is the dimension of the decision space, thus making it comparable to first-order distributed optimization techniques.
 The authors of \cite{alessio_net_giant} empirically also demonstrated a superior convergence rate compared to gradient-tracking based first order methods, as well as several other second order distributed optimization algorithms, including the well-known Network-DANE \cite{ref:HY-CH-XC}.  Although \cite{alessio_net_giant} proved semi-global exponential convergence using a two-time-scale separation principle that guarantees linear convergence for a sufficiently small stepsize, and empirically verified its superior performance, an explicit expression for the actual convergence rate or an explicit bound on the stepsize was not established.  In addition, while a superior linear convergence was empirically demonstrated compared to its first order optimization counterparts, there was no analytical justification for this observation. In this paper, we set out to provide a deeper analysis of the convergence of Network-GIANT and provide partial answers to some of the above unanswered questions. 

\subsection*{Contributions} 
Our main contributions are highlighted below.
\begin{itemize}[leftmargin=*, label = \(\circ\)]
\item We first provide a global linear convergence result with an explicitly computable rate for \(\ngiant\) for a sufficiently small step size $\eta$. 
This is achieved in a similar vein to \cite{li_na_gradient_tracking},  by establishing a linear inequality for a three-dimensional system consisting of the norms of consensus error, gradient tracking error, and the optimality gap error. We show that the linear convergence rate is explicitly given by the spectral radius of an associated $3 \times 3$ matrix dependent on the step size $\eta$, Lipschitz continuity ($L$) and strong convexity parameter ($\mu$) of the local cost functions, and the spectral norm ($\sigma$) of the doubly-stochastic consensus matrix of the underlying graph connecting the nodes. 
\item We also provide an explicit (albeit conservative) upper bound on the step size below which \(\ngiant\) is guaranteed to converge linearly (i.e., the aforementioned spectral radius is less than $1$).
\item Finally, we derive a mixed linear-quadratic inequality-based upper bound for the optimality gap norm, under the condition that the error ($\gamma$) due to the Hessian approximation is sufficiently small. Exploring this result further, we prove a local asymptotic convergence rate of $\left(1 - \eta(1-\frac{\gamma}{\mu})\right)$ as the algorithm approaches the optimum, and an associated less conservative upper bound on the stepsize,  which {\em analytically} explains the superior convergence rate of \(\ngiant\) compared to its first-order counterparts.
\end{itemize}
We illustrate the above findings through a distributed logistic-regression example with a reduced CovType dataset over regular expander-type and Erd\H{o}s–R\'enyi random graphs with varying degrees of connectivity. Furthermore, we refer the readers to \cite{alessio_net_giant} for a thorough comparison of \(\ngiant\) with other state-of-the-art fully distributed second-order algorithms.

\section{Problem Formulation}
We consider a fully distributed unconstrained optimization problem over a communication network, modeled by a graph, and is denoted by $\mathcal{G} = (\mathcal{N},\mathcal{E})$.  Here $\mathcal{N} = \{1, 2, ..., N\}$ and $\mathcal{E} \subseteq \mathcal{N} \times \mathcal{N}$ are the set of nodes and the set of bidirectional edges connecting the nodes, respectively.
    \begin{assumption}
        \label{assum:graph}
        We assume the following blanket properties about the network:
        \begin{itemize}
            \item The communication network is assumed to be an undirected, connected, and time-invariant graph.
            
            \item Each node/agent \(i \in \mathcal{N}\), can communicate only with its 1-hop neighbors.

            \item The network can be equivalently described by a weight matrix $W \in \mathbb{R}^{N \times N}$, where the element $w_{ij}$ is positive if there is an edge $(i, j) \in \mathcal{E}$ and zero otherwise.

            \item The consensus matrix $W$ is chosen to be symmetric and doubly stochastic, which implies that $W \mathbf{1} =\mathbf{1}$ and 
            $\mathbf{1}^T W = \mathbf{1}^T$ (here $\mathbf{1} \in \mathbb{R}^{N \times 1}$ is a column vector of all 1's), and the eigenvalues of $W$ lie in $(-1, 1]$.
        \end{itemize}
    \end{assumption}
     It is possible to construct this matrix in a distributed way, for example, using the Metropolis algorithm \cite{xiao2007distributed}. We will use the following conventions for vector and matrix norms:
    the vector norm $\norm{.}$  represents the $2$-norm, whereas for a matrix, $\norm{.}$ denotes the Frobenius norm. The matrix $2$-norm (or spectral norm), if used, will be denoted by $\norm{.}_2$. It is a well-known fact 
    that $\sigma = \norm{W - \frac{1}{N} \mathbf{1}\mathbf{1}^T}_2$ satisfies 
    $0 < \sigma < 1$.

 The unconstrained optimization problem is a fully distributed version of the Federated optimization problem considered in \cite{wang2018giant}:
The unconstrained global optimization problem is defined as 
\begin{equation}
    f(x^\star) = \min_{x \in \mathbb{R}^n} \left\{ f(x) = \frac{1}{N} \sum_{i=1}^N f_i(x) \right\},
    \label{eq:problem_formulation/optimization}
\end{equation}
where $f_(x)$ denotes the local loss function for the $i$-th node. 
\begin{remark} 
In typical distributed machine learning applications, the local loss function is often chosen as 
\begin{equation}
    f_i(x) = \frac{1}{m} \sum_{j=1}^{m} l_{ij}(x^T a_{ij}) + \frac{\tilde \lambda}{2} \norm{x}^2.
    \label{eq:problem_formulation/f_i}
\end{equation}
where 
each node $i \in \mathcal{N}$ owns $m$ data samples $\{ a_{ij} \}$ $j \in \{1, ..., m\}$, each associated with a convex, twice differentiable, and smooth loss function $l_{ij}(\cdot)$. The overall cost function at node $i$ is given by the sum of the local empirical error and a regularization term. Typical examples of such loss functions are 
linear regression or logistic regression problems. 
\end{remark}
We make the following standard assumption regarding the local loss functions $f_i(\cdot)$:
\begin{assumption}
\label{assumption_1}
 The local objective $f_i(\cdot)$ is $\mu$-strongly convex and $L$-smooth, i.e. there exists a constant $L$ such that $\forall x, x' \in \mathbb{R}^n$, and for all $i \in \{1, 2, \ldots, N \}$,  $\norm{\nabla f_i(x) - \nabla f_i(x') }_2 \leq L \norm{x - x'}$, and $\mu \mathbb{I} \preceq \nabla^2 f_i(x) \preceq L \mathbb{I} $, where $\mathbb{I}$ denotes an identity matrix of size 
$N\times N$. Note that this assumption implies that the global objective function $f(x)$ is also $\mu$-strongly convex and $L$-smooth. 

\end{assumption}
 Assumption \ref{assumption_1} guarantees the uniqueness of the minimizer of \eqref{eq:problem_formulation/optimization} and is standard in second-order optimization, and can often be guaranteed by the addition of a regularizer to the cost function if it is not strictly convex.

Let us define the matrices $\mathbf{x}_k, \mathbf{s}_k,  \nabla_k 
\in \mathbb{R}^{N \times n}$ by

\begin{align}
& \mathbf{x}_k  =  \begin{bmatrix} x_1^k  \\ \vdots \\ x_N^k \end{bmatrix}, \, \mathbf{s}_k =   \begin{bmatrix} s_1^k \\  \vdots \\ s_N^k \end{bmatrix}, \, \nabla_k =  \begin{bmatrix} \nabla f_1(x_1^k)  \\ \vdots \\ \nabla f_N(x_N^k) \end{bmatrix}. \label{globalvar} \end{align}
In order to avoid Kronecker-product-based cumbersome notations, we define the relevant vectors in the row-vector format throughout the paper. We assume that each node $i$ keeps a local copy $x_i^k \in \mathbb{R}^{1 \times n}$ of the global state vector 
$\mathbf{x}_k$ at the $k$-th iteration, and a vector $s_i^k \in \mathbb{R}^{1 \times n}$ that tracks the global average gradient 
$\nabla f(\mathbf{x}_k) = \frac{1}{N} \sum_{i=1}^N \nabla f_i(x_i^k)$, where $\nabla f_i(x_i^k) \in \mathbb{R}^{1 \times n}$, denotes the local gradient at the node $i$.  We also define the global Hessian as 
$\nabla^2 f(\mathbf{x_k}) = \frac{1}{N} \nabla^2f_i(x_i^k)$, and with a slight abuse of notation 
$\nabla^2 f(\tilde x) = \nabla^2 f(\mathbf{x_k}) \mid_{x_i^k = \tilde x^T}$ for further analysis.

	\section{Algorithm description}
	
	Define the local Hessian by \(\tilde{H_i}^k = \nabla^2 f_i(x_i^k)\) at the $i$-th node at the $k$-th iteration, and set the initial condition \(s_i^0 = \nabla f_i(x_i^0)\). The Network-GIANT algorithm is given by the two following consensus-based updates at each node $i$ at iteration $k$, one for the parameter vector $x_i^k$, and the other one being the familiar update for the gradient-tracking vector $s_i^k$ \cite{li_na_gradient_tracking}, as given below for all \(k \in \Nz\):\footnote{The set \(\Nz\) denotes the set of all natural numbers.}
    \begin{align}\label{netgalg}
        \begin{cases}
        x_i^{k+1} = \sum_{j=1}^n w_{ij} x_i^k  - \eta s_i^k \big(\tilde{H_i}^k\big)^{-1}\\
        s_i^{k+1} = \sum_{j=1}^n w_{ij} s_i^k + \nabla f_i(x_i^{k+1}) - \nabla f_i(x_i^k).
        \end{cases}
    \end{align}
    Here $\tilde{H_i}^k = \nabla^2 f_i(x_i^k)$ is invertible due to the strong convexity assumption on $f_i(x)$. The stepsize is denoted by $\eta \in (0,1)$, which needs to be chosen carefully.
    \begin{remark}\label{rem:further approx}
        Note that although the above algorithm requires an inverse local Hessian computation at each node, there are computationally efficient iterative approximate Hessian inverse calculation methods, e.g., a variation of the standard conjugate-gradient algorithm called LiSSA \cite{JMLR_hazan}, 
        or sketching-type approximation methods \cite{ref:SW-AG-MWM-18}. There are also similar efficient methods for computing the product of the inverse Hessian and gradients. These techniques can be used to alleviate the computational burden for Hessian inversion at the nodes in case of high dimensional problems.
    \end{remark}

    Using the matrix notation from \eqref{globalvar}, one can write the above algorithm compactly as follows:
    \begin{align}
   & \mathbf{x}_{k+1} = W \mathbf{x}_k - \eta \mathbf{y}_k \nonumber \\
   & \mathbf{s}_{k+1} = W \mathbf{s}_k  + \nabla_{k+1} - \nabla_k, \; \mathbf{s}_0 = \nabla_0.
        \label{netgalg_global}
    \end{align}
    Here $\mathbf{y}_k = [y_1^k \, y_2^k \, \ldots, y_N^k]^T$, where $y_i^k = s_i^k {\nabla^2 f_i(x_i^k)}^{-1} = s_i^k \tilde{H_i}^k $. For the subsequent convergence analysis of the algorithm given in \eqref{netgalg}, we also define the following average quantities  $\bar x_k = 
    \frac{\mathbf{1}^T \mathbf{x}_k}{N} = 
    \frac{1}{N} \sum_{i=1}^N x_i^k, \bar s_k = \frac{\mathbf{1}^T \mathbf{s}_k}{N}= \frac{1}{N} \sum_{i=1}^N s_i^k$ and $ g_k = \nabla f(\mathbf{x}_k)  = \frac{\mathbf{1}^T \nabla_k}{N}$. Using a similar inductive proof as in \cite{li_na_gradient_tracking}, it follows that $ \bar s^k = g_k$, that is, the average gradient tracking vector tracks the average global gradient at all steps, when initialized as $\mathbf{s}_0 = \nabla_0 $.

	\section{Convergence Analysis}
	In this section, we provide two main results regarding the convergence of the Network-GIANT algorithm given by 
    \eqref{netgalg_global}. First, under Assumptions 1(a) and 1(b), we establish that for a sufficiently small step size $\eta$, the 
	Network-GIANT algorithm enjoys a linear convergence rate, implying that the optimality gap $||\bar x^k - x^{\star}||$ decays exponentially with the iteration number $k$. We can provide an explicit expression for an upper bound $\bar \eta$ such that for $\eta < \bar \eta$, this result holds. In the second result, with additional assumptions on the Lipschitz continuity of the Hessians of the local loss functions and a small enough approximation error bound for the error between the true Hessian of $f(x)$, and the one given by the harmonic mean of the local Hessians, we prove a mixed linear-quadratic inequality based upper bound  for the optimality gap in the
    Network-GIANT algorithm.

\subsection{Linear Convergence Analysis of Network-GIANT}
    Before we proceed further, we quote the following results that have been established in \cite{li_na_gradient_tracking} under Assumption \ref{assumption_1}.
    We also define the three error terms: $\norm {\mathbf{x}_{k} - \mathbf{1} \bar x_k}$, $\norm{\mathbf{s}_{k} - \mathbf{1} g_k}$, and 
    $\norm{\bar {x}_{k} - x^{\star}}$, which correspond to the norms of the {\em consensus error}, {\em gradient tracking error}, and the 
    {\em optimality gap}, respectively.
    \begin{lemma} \label{lemmaslina}
    Suppose that Assumption 1 holds. 
    \begin{enumerate}[leftmargin=*, label = (\ref{lemmaslina}-\alph*)]
        \item For \(t \in \mathbb{N}\),  we have
        \begin{align}
       & \norm{ \nabla_k - \nabla_{k-1}} \leq L \norm{\mathbf{x}_{k} - \mathbf{x}_{k-1}}, \\
       & \norm{ g_k - g_{k-1}} \leq \frac{L}{\sqrt{N}} \norm{\mathbf{x}_{k} - \mathbf{x}_{k-1}}.
        \end{align}

        \item Recall that \(\mathbf{(s_k)_{k \in \mathbb{N}}}\) denotes the gradient-tracker.  Then,
        \begin{align}
          & \norm{\mathbf{s}_{k}} \leq  \norm{\mathbf{s}_{k} - \mathbf{1} g_k}  + L  \norm{ \mathbf{x}_{k} - \mathbf{1}  x^{\star}}, \\ \nonumber
          &  \leq \norm{\mathbf{s}_{k} - \mathbf{1} g_k} + L \norm {\mathbf{x}_{k} - \mathbf{1} \bar x_k} + L \sqrt{N} \norm{\bar x_k - x^{\star}}.
          \end{align}
    \end{enumerate}
     
    \end{lemma}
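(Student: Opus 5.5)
Part (\ref{lemmaslina}-a) follows row by row from the $L$-smoothness in Assumption~\ref{assumption_1}. Since the squared Frobenius norm is the sum of squared row norms, we can write
\begin{align*}
\norm{\nabla_k - \nabla_{k-1}}^2 = \sum_{i=1}^N \norm{\nabla f_i(x_i^k) - \nabla f_i(x_i^{k-1})}^2 \leq L^2 \sum_{i=1}^N \norm{x_i^k - x_i^{k-1}}^2 = L^2\norm{\mathbf{x}_k - \mathbf{x}_{k-1}}^2 .
\end{align*}
Taking square roots gives the first inequality. For the second, I will use $g_k - g_{k-1} = \frac{1}{N}\mathbf{1}^T(\nabla_k - \nabla_{k-1})$ together with Cauchy--Schwarz in the form $\norm{\mathbf{1}^T A} \leq \norm{\mathbf{1}}\,\norm{A} = \sqrt{N}\norm{A}$. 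This yields $\norm{g_k - g_{k-1}} \leq \frac{\sqrt N}{N} L \norm{\mathbf{x}_k - \mathbf{x}_{k-1}}$, which is the claimed bound.

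For (\ref{lemmaslina}-b), I will add and subtract terms. Write $\mathbf{s}_k = (\mathbf{s}_k - \mathbf{1}g_k) + \mathbf{1}g_k$. The key fact is that the global optimum satisfies $\nabla f(x^\star) = \frac{1}{N}\sum_i \nabla f_i(x^\star) = 0$. Hence
\begin{align*}
g_k = \frac{1}{N}\sum_{i=1}^N \big(\nabla f_i(x_i^k) - \nabla f_i(x^\star)\big).
\end{align*}
Applying the same Cauchy--Schwarz and row-wise Lipschitz argument as in part (a), now to the matrix $\nabla_k - \nabla^\star$ whose rows are $\nabla f_i(x^\star)$, gives $\norm{\mathbf{1}g_k} = \sqrt{N}\norm{g_k} \leq \norm{\nabla_k - \nabla^\star} \leq L\norm{\mathbf{x}_k - \mathbf{1}x^\star}$. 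Combined with the triangle inequality, this is the first line of (b).

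For the second line of (b), I will split $\mathbf{x}_k - \mathbf{1}x^\star = (\mathbf{x}_k - \mathbf{1}\bar x_k) + \mathbf{1}(\bar x_k - x^\star)$ and use $\norm{\mathbf{1}v} = \sqrt N \norm{v}$ for a row vector $v$.

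The only subtle point is the first line of (b), specifically the step $\sqrt N\norm{g_k} \leq L\norm{\mathbf{x}_k - \mathbf{1}x^\star}$. This is where the optimality condition $\nabla f(x^\star)=0$ is essential, so that $g_k$ can be written as an average of gradient differences. A bound using only the individual $\nabla f_i$ would fail, because $\nabla f_i(x^\star) \neq 0$ in general. The remaining steps are routine norm manipulations.
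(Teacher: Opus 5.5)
Your proof is correct. For (a) you use the row-wise Lipschitz bound together with Cauchy--Schwarz on $\mathbf{1}^T(\nabla_k-\nabla_{k-1})$, and for (b) the triangle inequality combined with $\nabla f(x^\star)=0$; this reproduces exactly the stated chain, including the intermediate bound $L\|\mathbf{x}_k-\mathbf{1}x^\star\|$. The paper gives no proof of its own and simply quotes the lemma from the cited gradient-tracking reference, and your argument is the standard elementary derivation of these facts.
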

     We will also need another result concerning the global linear convergence of a damped Newton method, stated as follows.
     \begin{lemma}
    Consider a column vector $z \in \mathbb{R}^n$, and a function $u(\cdot): \mathbb{R}^n \longrightarrow \mathbb{R}$, satisfying the smoothness and convexity assumptions in Assumption \ref{assumption_1}. Let $z^{\star}$ be the unique minimum of $u(z)$. Consider the following damped Newton iterative update: for \(t \in \mathbb{N}\), we have
     \begin{align} \label{lem:update}
         z^{+} =  z - \eta [\nabla^2 u(z)]^{-1} \nabla u(z), \; 0 < \eta < 1.
     \end{align}
     If $\eta \leq \frac{\mu}{L}$, then 
     \begin{align}
         \norm{z^{+} - z^{\star}} \leq (1-\eta \frac{\mu}{L}) \norm{z- z^{\star}}.
     \end{align}
         \end{lemma}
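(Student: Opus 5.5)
The plan is to write the one-step error of the damped Newton update \eqref{lem:update} as a fixed matrix acting on $e := z - z^\star$, and then to bound that matrix in the Euclidean operator norm.

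\textbf{Step 1 (error recursion).} Since $\nabla u(z^\star) = 0$, the fundamental theorem of calculus gives
\begin{align*}
\nabla u(z) = \nabla u(z) - \nabla u(z^\star) = A e, \qquad A := \int_0^1 \nabla^2 u\big(z^\star + t e\big)\, dt .
\end{align*}
By Assumption \ref{assumption_1}, $A$ is symmetric and satisfies $\mu \mathbb{I} \preceq A \preceq L \mathbb{I}$. Writing $H := \nabla^2 u(z)$, which also satisfies $\mu \mathbb{I} \preceq H \preceq L \mathbb{I}$, the update becomes
\begin{align*}
z^{+} - z^\star = \big(\mathbb{I} - \eta H^{-1} A\big) e .
\end{align*}
The claim is therefore equivalent to $\norm{(\mathbb{I} - \eta H^{-1}A)e} \leq (1 - \eta \mu / L)\norm{e}$.

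\textbf{Step 2 (squared-norm expansion).} Expand the left-hand side as
\begin{align*}
\norm{z^{+} - z^\star}^2 = \norm{e}^2 - 2\eta\, e^T H^{-1} A e + \eta^2 \norm{H^{-1} A e}^2 .
\end{align*}
The target $(1-\eta\mu/L)^2\norm{e}^2 = \norm{e}^2 - 2\eta(\mu/L)\norm{e}^2 + \eta^2(\mu/L)^2\norm{e}^2$ then reduces the claim to the single inequality
\begin{align*}
2\, e^T H^{-1} A e - \eta \norm{H^{-1}Ae}^2 \;\geq\; \Big(2\frac{\mu}{L} - \eta\frac{\mu^2}{L^2}\Big)\norm{e}^2 .
\end{align*}
For the quadratic term I would use $\norm{H^{-1}Ae} \leq \norm{H^{-1}}_2\norm{A}_2\norm{e} \leq (L/\mu)\norm{e}$. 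The restriction $\eta \leq \mu/L$ is what should make $\eta\norm{H^{-1}Ae}^2$ comparable to a single copy of the inner product rather than dominating it. In the commuting case ($H$ and $A$ simultaneously diagonalisable), $H^{-1}A$ is symmetric with spectrum in $[\mu/L, L/\mu]$. There the argument closes immediately: $\eta\lambda \leq 1$ for every eigenvalue $\lambda$, so each eigen-direction contracts by $1-\eta\lambda \leq 1-\eta\mu/L$. This case is the sanity check for the constants.

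\textbf{Step 3 (main obstacle: the non-symmetric product).} The hard step is the lower bound $e^T H^{-1}A e \geq c\,\norm{e}^2$ with a constant good enough to close the inequality in Step 2. In general $H^{-1}A$ is not symmetric, and its symmetric part is not obviously bounded below by $\mu/L$. The similarity transform $H^{-1/2}AH^{-1/2}$ gives the spectrum but costs a condition-number factor in the Euclidean norm, which must be avoided here.

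My first attempt is to set $v = H^{-1}e$ and rewrite the inner product as $v^T A H v$. I would then split $AH = \tfrac12(AH + HA) + \tfrac12(AH - HA)$, noting that the antisymmetric part contributes nothing to a quadratic form. This leaves $\tfrac12 v^T(AH+HA)v$, which I would bound below with a Kantorovich-type estimate for products of SPD matrices whose spectra lie in $[\mu, L]$. Failing that, I would exploit that $A$ and $H$ are Hessians of the same function along nearby points: write $H^{-1}A = \mathbb{I} + H^{-1}(A - H)$ and treat $\mathbb{I} - \eta H^{-1}A = (1-\eta)\mathbb{I} - \eta H^{-1}(A-H)$ perturbatively. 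The leading factor $1-\eta$ already lies below $1 - \eta\mu/L$, leaving a slack of $\eta(1-\mu/L)$ against which to absorb the perturbation term $\eta\norm{H^{-1}(A-H)}_2 \leq \eta(L-\mu)/\mu$.

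This last bound shows the difficulty. It exceeds the slack $\eta(1-\mu/L)$ by exactly a factor $L/\mu$, so the perturbative route closes only with the sharper estimate $\norm{H^{-1}(A-H)e} \leq (1-\mu/L)\norm{e}$, which holds when $H$ and $A$ commute. The generic bound gives up precisely the $L/\mu$ factor that Step 3 has to recover. Step 3 is where I expect the real work, and possibly a need to tighten the constant, to lie.
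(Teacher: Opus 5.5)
\textbf{Where your proposal and the paper meet.} Your Step 1 is exactly the paper's first step. The paper sets $H_z=[\nabla^2u(z)]^{-1}\int_0^1\nabla^2u(z^\star+\lambda(z-z^\star))\,d\lambda$, which is your $H^{-1}A$, and writes $z^+-z^\star=(\mathbb{I}-\eta H_z)(z-z^\star)$. It then finishes in one line: it asserts $\frac{\mu}{L}\mathbb{I}\preceq H_z\preceq\frac{L}{\mu}\mathbb{I}$ and reads off $\|\mathbb{I}-\eta H_z\|_2\le 1-\eta\frac{\mu}{L}$. That is precisely your commuting-case argument, applied without justification to a matrix that is not symmetric. $H^{-1}A$ is only \emph{similar} to the symmetric $H^{-1/2}AH^{-1/2}$, so its eigenvalues lie in $[\mu/L,L/\mu]$. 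This bounds the spectral radius of $\mathbb{I}-\eta H_z$, and its operator norm induced by $v\mapsto\|H^{1/2}v\|$, but not its Euclidean operator norm. So the obstacle you isolate in Step 3 is genuine, and the paper's proof does not get past it either.

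\textbf{Why Step 3 cannot be closed.} The Euclidean contraction in the lemma is false in general. Take $n=2$, $e=(1,0)^T$ and
\[
A=\frac15\begin{pmatrix}9&12\\12&41\end{pmatrix},\qquad H=\frac15\begin{pmatrix}41&12\\12&9\end{pmatrix},
\]
both with spectrum $\{1,9\}$.

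To realize these as a Newton step, let $q_A(w)=\frac12w^TAw$, and let $q_H$ be the quadratic with Hessian $H$ that agrees with $q_A$ to first order at $e$. Put $u=q_A+\chi\,(q_H-q_A)$, where $\chi=1$ for $\|w-e\|\le\delta^2$ and $\chi=0$ for $\|w-e\|\ge\delta$. In between, $\chi$ is a smooth function of $\log\|w-e\|$ with derivatives $O(1/\log(1/\delta))$. Since $q_H-q_A=\frac12(w-e)^T(H-A)(w-e)$, the cross terms in $\nabla^2u$ are $O(1/\log(1/\delta))$. Hence, for any $\epsilon>0$ and $\delta$ small enough, $u$ is smooth with $(1-\epsilon)\mathbb{I}\preceq\nabla^2u\preceq(9+\epsilon)\mathbb{I}$ and $z^\star=0$. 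At $z=e$ we have $\nabla u(z)=Ae$ and $\nabla^2u(z)=H$.

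Therefore $z^+-z^\star=e-\eta H^{-1}Ae=\big(1+\tfrac{7}{25}\eta,\,-\tfrac{128}{75}\eta\big)^T$. Its norm exceeds $\|z-z^\star\|=1$ for every $\eta>0$.

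In your notation, $e^TH^{-1}Ae=-\tfrac{7}{25}$. So with $v=H^{-1}e$ the form $\tfrac12v^T(AH+HA)v$ is negative, and no Kantorovich-type lower bound can exist. More generally, take $A$ with spectrum $\{1,\kappa\}$ and $A_{11}=(1+\kappa)/4$, and let $H$ be $A$ with its diagonal entries swapped. This violates the factor $1-\eta\mu/L$ for small $\eta$ whenever $\kappa>3$.

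\textbf{What your approach does prove.} You get $\|H^{1/2}(z^+-z^\star)\|\le(1-\eta\frac{\mu}{L})\|H^{1/2}(z-z^\star)\|$ with $H=\nabla^2u(z)$. This gives the Euclidean bound only with an extra factor $\sqrt{\kappa}$, and since the weight changes with $z$, it does not chain across iterations. You also get the stated bound when $H$ and $A$ commute. The lemma itself has to be weakened accordingly.
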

         \begin{proof}
             Define $H_z \coloneqq [\nabla^2 u(z)]^{-1}  \int_0^1 \nabla^2 u (z^{\star} + \lambda (z-z^{\star}) d \lambda$. Since \(z^{*}\) is the minimizer of \(u\), \(\nabla u(z^{\star}) = 0\).
             Then one can show that 
             \begin{align}
                 & [\nabla^2 u(z)]^{-1} \nabla u(z)  = [\nabla^2 u(z)]^{-1} (\nabla u(z) - \nabla u(z^{\star})) \nonumber \\
                 & = \bigg( [\nabla^2 u(z)]^{-1} \int_0^1 \nabla^2 u (z^{\star} + \lambda (z-z^{\star}) d \lambda \bigg)
                 (z - z^{\star}) \nonumber \\
                 & = H_z (z - z^{\star}), 
             \end{align}
             which implies
             $z^{+} - z^{\star} = ( \mathbb{I} - \eta H_z) (z - z^\star)$. Appealing to Assumption \ref{assumption_1}, and using $\frac{\mu}{L} \mathbb{I} \leq H_z \leq \frac{L}{\mu} \mathbb{I}$, it follows that 
             $$(1 - \eta \frac{L}{\mu}) \mathbb{I} \leq (\mathbb{I} - \eta H_z) \leq (1-\eta \frac{\mu}{L}) \mathbb{I}.$$
             Therefore, if $\eta \leq \frac{\mu}{L}$, we obtain 
             $0 \leq \mathbb{I} - \eta H_z \leq (1-\eta \frac{\mu}{L})\mathbb{I}$, implying 
             $\norm{\mathbb{I} - \eta H_z}_2 \leq (1 - \eta \frac{\mu}{L})$, which completes the proof.
         \end{proof}
         \begin{remark}
             It is well known that a standard Newton's method, initialized with a damped stage where the step size $\eta$ is chosen by a backtracking line search, will eventually allow $\eta \rightarrow 1$, as the gradient norm falls below a certain threshold, resulting in a local quadratic convergence of the Newton's method with $\eta=1$ \cite{boydbook}. However, the damped Newton's method with a fixed step size $\eta$ also enjoys global linear convergence for a sufficiently small $\eta$ for strongly convex and smooth functions, as shown in the above proof. Interestingly, the global linear convergence of a slightly more general damped Newton's method is left as an exercise 
             in \cite{polyak1987introduction}, but has been established as a corollary with a more general Hessian stability property in 
             \cite{karimireddy2018global}.
         \end{remark}
    Using the above two lemmas, one can prove the following:
    \begin{theorem}
        Consider the NETWORK-GIANT algorithm given by \eqref{netgalg_global}, implemented with a doubly stochastic consensus matrix $W$ with a spectral norm $\sigma = \norm{W-  \frac{1}{N} \mathbf{1}\mathbf{1}^T}_2 < 1$. Define the consensus error(CE), the gradient tracking error (Grad-T), and the optimality gap error (OE)  (at the $k$-th iteration)  as $\mathbf{x}_{k} - \mathbf{1} \bar x_{k}$,
        $\mathbf{s}_{k} - \mathbf{1} g_{k}$, and $\bar {x}_{k} - x^{\star}$ respectively. 
        Assuming $\eta \leq \frac{\mu}{L}$, and that Assumption 2 holds, we can show the following inequality:
        \begin{align}
        & \begin{bmatrix} \norm{\mathbf{x}_{k+1} - \mathbf{1} \bar x_{k+1}} \\ \norm{\mathbf{s}_{k+1} - \mathbf{1} g_{k+1}} \\
               \sqrt{N} \norm{\bar {x}_{k+1} - x^{\star}}   \end{bmatrix} \leq   G(\eta)  \begin{bmatrix} \norm{\mathbf{x}_{k} - \mathbf{1} \bar x_{k}} \\ 
                \norm{\mathbf{s}_{k} - \mathbf{1} g_{k}} \\
                \sqrt{N} \norm{\bar {x}_{k} - x^{\star}}   \end{bmatrix},           
        \label{norminequality}
        \end{align} 
    where the matrix \(G(\eta)\) is given by
    \begin{align*}
                & G(\eta) = \begin{bmatrix}
                \sigma+ \eta \frac{L}{\mu} & \frac{\eta}{\mu} & \eta \frac{L}{\mu} \\
                2L + \eta \frac{L^2}{\mu} & \sigma+ \eta \frac{L}{\mu} & \eta\frac{L^2}{\mu} \\
                \eta \frac{L}{\mu} & \frac{\eta}{\mu} & 1- \eta \frac{\mu}{L} 
        \end{bmatrix}.
    \end{align*}
    
    \label{lin_convergence}    
    \end{theorem}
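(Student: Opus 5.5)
We bound each row of \eqref{norminequality} separately, following the gradient-tracking analysis of \cite{li_na_gradient_tracking}. The only structural difference from gradient tracking is that the direction $\mathbf{s}_k$ is replaced by $\mathbf{y}_k$, whose rows are $y_i^k = s_i^k (\tilde H_i^k)^{-1}$. Since $\norm{(\tilde H_i^k)^{-1}}_2 \le 1/\mu$, we get $\norm{\mathbf{y}_k} \le \frac{1}{\mu}\norm{\mathbf{s}_k}$. Lemma~(\ref{lemmaslina}-b) then gives
\[
\norm{\mathbf{y}_k} \le \frac{1}{\mu}\Big(\norm{\mathbf{s}_k - \mathbf{1}g_k} + L\norm{\mathbf{x}_k-\mathbf{1}\bar x_k} + L\sqrt{N}\norm{\bar x_k - x^\star}\Big).
\]
This single bound produces the $\eta/\mu$ and $\eta L/\mu$ entries of $G(\eta)$.

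\emph{Row 1 (consensus error).} Multiplying the $\mathbf{x}$-update by $(\mathbb{I}-\frac1N\mathbf{1}\mathbf{1}^T)$ gives
\[
\mathbf{x}_{k+1}-\mathbf{1}\bar x_{k+1} = (W-\tfrac1N\mathbf{1}\mathbf{1}^T)(\mathbf{x}_k-\mathbf{1}\bar x_k) - \eta(\mathbb{I}-\tfrac1N\mathbf{1}\mathbf{1}^T)\mathbf{y}_k .
\]
The projection has spectral norm at most $1$, so the consensus error is bounded by $\sigma\norm{\mathbf{x}_k-\mathbf{1}\bar x_k}+\eta\norm{\mathbf{y}_k}$. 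Inserting the bound on $\norm{\mathbf{y}_k}$ yields the first row $(\sigma+\eta L/\mu,\ \eta/\mu,\ \eta L/\mu)$.

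\emph{Row 2 (gradient tracking error).} Because $\bar s_k=g_k$, we have
\[
\mathbf{s}_{k+1}-\mathbf{1}g_{k+1} = (W-\tfrac1N\mathbf{1}\mathbf{1}^T)(\mathbf{s}_k-\mathbf{1}g_k) + (\mathbb{I}-\tfrac1N\mathbf{1}\mathbf{1}^T)(\nabla_{k+1}-\nabla_k).
\]
By Lemma~(\ref{lemmaslina}-a), this is bounded by $\sigma\norm{\mathbf{s}_k-\mathbf{1}g_k}+L\norm{\mathbf{x}_{k+1}-\mathbf{x}_k}$. Next, $\mathbf{x}_{k+1}-\mathbf{x}_k = (W-\mathbb{I})(\mathbf{x}_k-\mathbf{1}\bar x_k) - \eta\mathbf{y}_k$, using $(W-\mathbb{I})\mathbf{1}=0$. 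Since the eigenvalues of $W$ lie in $(-1,1]$, $\norm{W-\mathbb{I}}_2\le 2$, so $\norm{\mathbf{x}_{k+1}-\mathbf{x}_k}\le 2\norm{\mathbf{x}_k-\mathbf{1}\bar x_k}+\eta\norm{\mathbf{y}_k}$. Substituting gives the row $(2L+\eta L^2/\mu,\ \sigma+\eta L/\mu,\ \eta L^2/\mu)$. One caveat: the displayed middle entry $\sigma+\eta L/\mu$ is what one gets by crudely bounding $\eta L/\mu \le 1$ where it appears as $\eta L/\mu$. I expect the natural entry to be $\sigma+\eta L/\mu$ directly, since $L\cdot\eta/\mu$ is exactly that.

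\emph{Row 3 (optimality gap).} This is the main obstacle, because the averaged update is not itself a damped Newton step. Averaging gives $\bar x_{k+1} = \bar x_k - \frac{\eta}{N}\sum_i s_i^k(\tilde H_i^k)^{-1}$. I would compare it with the ideal quantity
\[
\bar x_k - \frac{\eta}{N}\sum_i \nabla f_i(\bar x_k)\big(\nabla^2 f_i(\bar x_k)\big)^{-1}.
\]
That formulation is awkward, so I instead add and subtract $\frac{\eta}{N}\sum_i g_k(\tilde H_i^k)^{-1}$, splitting the step into three pieces:
\begin{itemize}
\item $\frac{\eta}{N}\sum_i (s_i^k-g_k)(\tilde H_i^k)^{-1}$, bounded by $\frac{\eta}{\mu\sqrt N}\norm{\mathbf{s}_k-\mathbf{1}g_k}$ via Cauchy--Schwarz;
\item $g_k$ replaced by $\nabla f(\bar x_k)$ inside the step, costing $\frac{\eta L}{\mu\sqrt N}\norm{\mathbf{x}_k-\mathbf{1}\bar x_k}$ by Lemma~(\ref{lemmaslina}-a);
\item the main term $\bar x_k - \eta\,\nabla f(\bar x_k)\,\bar H_k^{-1}$, where $\bar H_k^{-1}=\frac1N\sum_i(\tilde H_i^k)^{-1}$ satisfies $\frac1L\mathbb{I}\preceq \bar H_k^{-1}\preceq\frac1\mu\mathbb{I}$.
\end{itemize}

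For the main term I reuse the proof of the damped Newton lemma, with $[\nabla^2u(z)]^{-1}$ replaced by $\bar H_k^{-1}$. Writing $\nabla f(\bar x_k)-\nabla f(x^\star)$ through the integral Hessian gives $(\mathbb{I}-\eta H)(\bar x_k-x^\star)$. Here $H$ is a product of a matrix in $[\frac1L,\frac1\mu]$ and one in $[\mu,L]$, so its eigenvalues lie in $[\mu/L, L/\mu]$. The main difficulty is that $H$ is not symmetric, so I would symmetrize by a similarity transform, exactly as the lemma's proof implicitly does. Then $\eta\le\mu/L$ gives contraction $1-\eta\mu/L$. Multiplying the whole row by $\sqrt N$ gives the third row $(\eta L/\mu,\ \eta/\mu,\ 1-\eta\mu/L)$, and stacking the three rows yields \eqref{norminequality}.
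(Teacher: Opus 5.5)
Your rows~1 and~2 are the paper's argument: the same identities, the bound $\|\mathbf{y}_k\|\le\|\mathbf{s}_k\|/\mu$, Lemma~1-b, and $\|W-\mathbb{I}\|_2\le 2$. Your row-2 caveat is moot, since the entry is $\sigma+L\cdot\eta/\mu$ with no extra bounding. Your row-3 split is also the paper's. The matrix multiplying $\tilde x_k-\tilde x^\star$ in its row-3 identity is exactly your $H=AB$, with $A=H_{app}(\mathbf{x}_k)^{-1}=\frac1N\sum_i(\tilde H_i^k)^{-1}$ and $B=\delta(\tilde x_k)=\int_0^1\nabla^2 f(\tilde x^\star+\lambda(\tilde x_k-\tilde x^\star))\,d\lambda$. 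Routing $s_i^k-\nabla f(\bar x_k)$ through $g_k$ gives your two perturbation pieces.

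There is a genuine gap at the step you flagged yourself.
\begin{itemize}
\item A similarity transform preserves eigenvalues, not the spectral norm. The identity $\mathbb{I}-\eta AB=A^{1/2}(\mathbb{I}-\eta A^{1/2}BA^{1/2})A^{-1/2}$ gives contraction by $1-\eta\mu/L$ only in the norm $v\mapsto\|A^{-1/2}v\|$. That norm depends on $k$ through $\mathbf{x}_k$ and is not the one in the theorem.
\item In the Euclidean norm the same identity yields only $\|\mathbb{I}-\eta AB\|_2\le\sqrt{L/\mu}\,(1-\eta\mu/L)$.
\item The Euclidean bound cannot follow from the spectral information alone. Take $\mu=1$, $L=9$, $A=\mathrm{diag}(1,1/9)$, $B=\left[\begin{smallmatrix}5&4\\4&5\end{smallmatrix}\right]$ (spectra $\{1/9,1\}$ and $\{1,9\}$), and $v=(1,-3)^T$. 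Then $(\mathbb{I}-\eta AB)v=(1+7\eta,\,-3+\tfrac{11}{9}\eta)^T$, whose squared norm is $10+\tfrac{20}{3}\eta+\tfrac{4090}{81}\eta^2>\|v\|^2$ for every $\eta>0$. The reason is that $AB+(AB)^T$ is indefinite.
\end{itemize}

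The paper's proof has the same hole. It asserts $\frac{\mu}{L}\mathbb{I}\preceq\bar H_i^k\preceq\frac{L}{\mu}\mathbb{I}$ for a nonsymmetric matrix, exactly as in the proof of Lemma~2, and then reads off a spectral-norm bound. So you have located the weak point correctly, but your patch does not close it.

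A valid third row needs more than the spectral bounds of Assumption~2. Two options:
\begin{itemize}
\item Control $\|H_{app}(\mathbf{x}_k)-\delta(\tilde x_k)\|$ as in the proof of Theorem~3. This gives the factor $1-\eta+\frac{\eta}{\mu}\|H_{app}-\delta\|$.
\item Measure optimality by $\sqrt{f(\bar x_k)-f(x^\star)}$ instead of $\|\bar x_k-x^\star\|$. The SPD preconditioner $A$ then does give a contraction: $\sqrt{1-\eta\mu/L}$ for $\eta\le\mu^2/L^2$, by $L$-smoothness and the PL inequality. The price is different entries in $G(\eta)$.
\end{itemize}
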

    \begin{proof}
        See Appendix \ref{appen:th_1} for a proof.
    \end{proof}
    It is clear from \eqref{norminequality} that the three error norms will go to zero exponentially fast as long as the spectral radius 
    $\rho(G(\eta)) < 1$. Since $G(\eta)$ is a matrix with all positive elements, from the Perron-Frobenius theorem, the largest eigenvalue is real and positive. In the next theorem, we provide an upper bound on the step size $\eta$, ensuring that the spectral radius $\rho(G(\eta)) < 1$. 

\begin{theorem}\label{th:linear rate}
   Suppose the stepsize  $\eta < \bar \eta = \frac{(1-\sigma)^2} {2(2-\sigma)\left(\kappa + \kappa^3 \right)}$, where 
   $\kappa= \frac{L}{\mu}$ is the condition number of the global Hessian. Then it follows that $\rho(G(\eta)) < 1$. 
    \end{theorem}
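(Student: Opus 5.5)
Since $G(\eta)$ is entrywise nonnegative (indeed positive for $\eta>0$), I will use the standard Perron--Frobenius sufficient condition: if there is a vector $v=(v_1,v_2,v_3)^T$ with positive entries and $G(\eta)v<v$ componentwise, then $\rho(G(\eta))<1$. This follows from $\rho(G)\le \max_i (Gv)_i/v_i$, which holds for nonnegative matrices and positive $v$. The proof then reduces to choosing $v$ well and checking three scalar inequalities. This is the same route as in \cite{li_na_gradient_tracking}. An alternative is to expand $\det(\lambda\mathbb{I}-G(\eta))$ and check $\det(\mathbb{I}-G(\eta))>0$ together with positivity of the leading principal minors of $\mathbb{I}-G(\eta)$ (an M-matrix criterion). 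The test-vector approach gives cleaner explicit constants, so I will use it.

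The structure of $G(\eta)$ suggests how to pick $v$. The third row contracts by $1-\eta\mu/L$, and its off-diagonal entries are $O(\eta)$. The first row contracts by $\sigma$ plus $O(\eta)$ terms. The second row carries the non-small coupling $2L$ from the consensus error, so $v_2$ must be of order $L v_1/(1-\sigma)$. I will therefore take $v_3=1$, choose $v_1$ proportional to $\eta$ (small enough to keep row 3 under control), and set $v_2 = c\,L v_1/(1-\sigma)$ with a constant such as $c=4$.

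With this choice the three componentwise inequalities become:
\begin{align*}
&\text{row 1: } \eta\tfrac{L}{\mu}v_1+\tfrac{\eta}{\mu}v_2+\eta\tfrac{L}{\mu} < (1-\sigma)v_1,\\
&\text{row 2: } (2L+\eta\tfrac{L^2}{\mu})v_1+\eta\tfrac{L}{\mu}v_2+\eta\tfrac{L^2}{\mu} < (1-\sigma)v_2,\\
&\text{row 3: } \eta\tfrac{L}{\mu}v_1+\tfrac{\eta}{\mu}v_2 < \eta\tfrac{\mu}{L}.
\end{align*}
In row 3 the factor $\eta$ cancels, which forces $v_1$ and $v_2$ to be $O(\mu^2/L^2)$ quantities. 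Substituting these into rows 1 and 2, the $v_1$-independent terms $\eta L/\mu$ and $\eta L^2/\mu$ must be dominated by $(1-\sigma)v_1$ and $(1-\sigma)v_2$. This yields conditions of the form $\eta \lesssim (1-\sigma)^2\mu^3/L^3$ and $\eta\lesssim(1-\sigma)^2/\kappa$. Combining them should produce the factor $\kappa+\kappa^3$ in the denominator. I will organise the bookkeeping by normalising $v_1=t\,\mu/L$ and optimising over $t$. The factor $(2-\sigma)$ presumably arises from summing $1+(1-\sigma)$ when the contributions of rows 1 and 2 are merged.

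The main obstacle is getting the constants to match the stated $\bar\eta$ exactly rather than just up to constants. My plan is to first show existence of a valid $v$ whenever $\eta<\bar\eta$ with generous slack, and only then tune $c$ and $t$. If the test-vector route does not reproduce the exact $(2-\sigma)$ factor, I will switch to the characteristic-polynomial route. There one writes $p(\lambda)=\det(\lambda\mathbb{I}-G(\eta))$ and shows $p(1)>0$, using that $p(\lambda)\to\infty$ and that the Perron root is the largest real root. For $p(1)$ one needs $(1-\sigma-\eta\kappa)^2\eta\mu/L$ to exceed the sum of the coupling terms, each of order $\eta^2$ or $\eta\cdot 2L\cdot\eta/\mu$. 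Bounding $1-\sigma-\eta\kappa\ge(1-\sigma)/\ldots$ under the step-size condition should produce the same threshold. In this route one must also rule out a real root of $p$ exceeding $1$ with $p(1)>0$. I would handle this by checking that $p'(\lambda)>0$ and $p''(\lambda)>0$ on $[1,\infty)$, since all coefficients of the shifted polynomial $p(1+s)$ are positive for small $\eta$.
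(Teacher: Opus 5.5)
Your test-vector (Collatz--Wielandt) route is sound and differs from the paper's. The paper expands $\det(\lambda\mathbb{I}-G(\eta))$ and observes that at $\lambda=1$ the $\eta^3$ terms cancel. It then reduces exactly to $\eta\big[(1-\sigma)^2/\kappa-2\eta(2-\sigma)(1+\kappa^2)\big]$, so $1$ is an eigenvalue only at $\eta=0$ and $\eta=\bar\eta$. Combined with $\rho(G(0))=1$, the derivative $-\mu/L$ of the Perron root at $\eta=0$, and continuity of the Perron root, this gives $\rho(G(\eta))<1$ on $(0,\bar\eta)$.

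That same computation corrects one point in your plan: there is no ``generous slack''. Since $1$ is an eigenvalue of $G(\bar\eta)$, any lossy step produces a strictly smaller threshold. This covers a fixed $c=4$, and also bounding $1-\sigma-\eta\kappa$ below by a fixed fraction of $1-\sigma$ in your fallback instead of expanding $p(1)$ exactly. For instance, $c=4$ only gives $\eta<(1-\sigma)^2/\big((5-\sigma)(\kappa+\kappa^3)\big)$.

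The exact constant comes from the balanced choice $c=3-\sigma$, which makes rows 1 and 2 coincide. Set $q=\frac{(1-\sigma)^2}{2(2-\sigma)}$, $s=v_1+v_2/L$, $v_1=\frac{1-\sigma}{2(2-\sigma)}s$ and $v_2=\frac{3-\sigma}{2(2-\sigma)}Ls$. Your three inequalities then become
\begin{align*}
q\,s>\eta\kappa\,(s+1), \qquad s<\kappa^{-2}.
\end{align*}
These are compatible iff $\eta\kappa(1+\kappa^2)<q$, i.e.\ $\eta<\bar\eta$, and any $s\in\big(\eta\kappa/(q-\eta\kappa),\,\kappa^{-2}\big)$ works. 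Eliminating $v_1,v_2$ from rows 1--2 for a general positive $v$ shows this condition is also necessary.

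If you use the fallback instead, positivity of the coefficients of $p(1+s)$ must be checked on all of $(0,\bar\eta)$, not just for small $\eta$. It does hold there: $\delta=\eta\kappa<(1-\sigma)^2/(4(2-\sigma))$ gives $(1-\sigma-\delta)^2>2\delta+3\delta^2$. The leading-minor M-matrix test you mention is even quicker.

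As for what each buys: your certificate is elementary and avoids the eigenvalue-perturbation and crossing argument. It also hands you an explicit weighted max-norm in which the three error norms contract at rate $\max_i (G v)_i/v_i$. The paper's route yields the closed form of $\det(\mathbb{I}-G(\eta))$, which shows at once that $\bar\eta$ is sharp for $G(\eta)$: for $\eta>\bar\eta$ the determinant is negative, forcing a real eigenvalue above $1$.
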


    \begin{proof}
        A proof is provided in Appendix \ref{appen:th_2}.
    \end{proof}
    \begin{remark}
      In \cite{li_na_gradient_tracking}, which established a linear convergence result for the gradient tracking based first-order distributed optimization algorithm, the authors showed that (see  Lemma 2 in \cite{li_na_gradient_tracking}) when $\eta < \frac{1}{L}$ (using the notation of our paper), the spectral radius of the corresponding matrix 
      $$\bar G(\eta) =\begin{bmatrix}
        \sigma  & \eta  & 0 \\
        2L + \eta L^2 & \sigma+ \eta L & \eta L^2 \\
        \eta L & 0 & 1- \eta \mu  \end{bmatrix}    $$
        is upper bounded, and for a specific value of $\eta = \frac{\mu}{L^2} (\frac{1-\sigma}{6})^2$, the spectral radius 
    $\rho(\bar G(\eta))  < 1$. Using a similar analysis as in the proof of Theorem 2 in our paper, one can obtain a tighter bound on $\eta $ compared to $\frac{1}{L}$, which is given by 
    $\eta < \tilde \eta = \frac{-(3-\sigma)+\sqrt{(3-\sigma)^2 + 4 (1-\sigma)^2 (1+\kappa)}}{2 L (1 + \kappa)}$, for which, the spectral radius 
    $\rho(\bar G(\eta))  < 1$. 
    \end{remark}
\begin{remark}
Note that the upper bounds on the step sizes for both the Network-GIANT algorithm and gradient tracking based algorithm in \cite{li_na_gradient_tracking}, given by $\bar \eta$, and $\tilde \eta$ are simply sufficient conditions, as the evolution of the three error norms in \eqref{norminequality} is given by inequalities rather than equalities. Thus, while choosing an $\eta$ below the corresponding upper bounds will definitely guarantee convergence, it may be possible to obtain faster convergence with a larger step size. For example, 
for a graph with $\sigma=0.7$, and $L=5, \mu=1$ (i.e., a mild condition number of $5$), it can be shown through numerical computation for $0 < \eta < \tilde \eta$, that the smallest value of 
$\rho(\bar G(\eta))$ is $0.9954$ for the distributed gradient tracking algorithm in 
\cite{li_na_gradient_tracking}.
Similar conservative convergence rates, based on Theorems 1 and 2, hold for the Network-GIANT algorithm.
\end{remark}
\begin{remark}
It can be easily verified that there is no universal ordering between $\bar \eta$ and $\tilde \eta$, as depending on the specific parametric values of $L, \mu$, and $\sigma$, either $\bar \eta <\tilde \eta $ or $\tilde \eta <\bar \eta $ may be possible. 
\end{remark}

\begin{remark}
    \label{rem:global sigma}
    A global parameter such as the spectral radius \(\sigma\) can be estimated online in a fully distributed manner; see, for example, \cite{ref:RA-GS-DVD-CS-KHJ-etal-14,ref:AY-15,ref:YZ-SL-JW-20}, which rely on local data and consensus-driven protocols.
\end{remark}
    \subsection{A Mixed Linear-Quadratic Inequality for the Optimality Gap}
    Motivated by the conservativeness of the global linear convergence results in the previous subsection, we investigate why we obtain substantially faster convergence rates for Network-GIANT in numerical results than those predicted by Theorems 1 and 2.
    To this end, we show that under some further assumptions,  one can obtain a mixed linear-quadratic convergence rate of the optimality gap norm $\norm{\bar {x}_{k} - x^{\star}}$. Before we proceed, we define the `true' global Hessian (arithmetic mean of the local Hessians) and the `approximate' Hessian (given by the harmonic mean of the local Hessians), respectively, by 
    \begin{align*}
        \begin{cases}
           H_{tr}(\mathbf{x_k}) = \frac{1}{N}\sum_{i=1}^N \tilde H_i^k,\\
           H_{app}(\mathbf{x_k}) = \left( \frac{1}{N}\sum_{i=1}^N (\tilde H_i^k)^{-1} \right)^{-1},
        \end{cases}
    \end{align*}
    where \(\tilde{H_i}^k = \nabla^2 f_i(x_i^k)\) is the local Hessian of the \(i\)-th agent at \(k\)-th iteration.\footnote{Here the subscripts `tr' and `app' are shorthand notations for `true' and `approximate'.}

    We make the following two assumptions. The first assumption is standard in the analysis of Newton-type optimization algorithms, and the second assumption is on the tightness of the approximation of the global Hessian by the harmonic mean of the local Hessians.
    \begin{assumption}
        We assume that the Hessians of all the local cost functions are Lipschitz continuous, i.e.
        \begin{align}
            \norm {\nabla^2 f_i(x) - \nabla^2 f_i(y)} \leq \bar L \norm{x-y}, \forall i.
        \end{align}
        \label{hessianlip}
    \end{assumption}
    \vspace*{-0.2cm}
    \begin{assumption}\label{hessapprox}
        There exists a sufficiently small $\gamma$ such that 
        \begin{align}
        \norm{H_{tr}(\mathbf{x}) - H_{app}(\mathbf{x})} \leq \gamma, \; \forall \mathbf{x} \in \Rbb^{N \times n}. 
        \end{align}
    \end{assumption}
    \begin{remark}
        \label{rem:hessian approx}
        Note that Assumption \ref{hessapprox}  is quite standard in the literature on approximate Newton-based algorithms, including, for example, \cite{ye2021approximate} (centralized ML), and \cite{li2020communication} (decentralized ML with Network-DANE). It is generally difficult to derive useful explicit analytical bounds on $\norm{H_{tr}(\mathbf{x}) - H_{app}(\mathbf{x})}$. Nevertheless, in \cite{wang2018giant}, the authors empirically demonstrated that, for regression-based local loss functions of the form $f_i(x) = \frac{1}{m} \sum_{j=1}^{m} l_{ij}(x^T a_{ij}) + \frac{\lambda}{2} \norm{x}^2$,  when the data samples are distributed homogeneously across agents and the local Hessian approximations constructed from the local sample sets are sufficiently accurate, the arithmetic mean $H_{tr}(\mathbf{x})$ can indeed be approximated by the harmonic mean $H_{app}(\mathbf{x})$ with reasonably good accuracy. Indeed, in Section V (Numerical Analysis), we illustrate with numerical results that the global Hessian approximation error achieved via Network-GIANT can be made much smaller than the least eigenvalue of the true global Hessian when datasets across the various agents are statistically homogeneous (see Figure \ref{fig:hetero_hess_error}).     
    \end{remark}
	\begin{remark}
        Note also that the following linear-quadratic inequality result is not limited to Network-GIANT only, as any distributed approximate-Newton algorithm that can achieve a global Hessian approximation error less than $\mu$ will satisfy the mixed linear-quadratic inequality result stated below.
	\end{remark}
    \begin{theorem}\label{th:mixed LQ rate}
     Under Assumptions \ref{assumption_1}, \ref{hessianlip}, and \ref{hessapprox},  given $\gamma < \mu$, and 
     $\eta < 1$,  the optimality gap error norm $\norm{\bar {x}_{k+1} - x^{\star}}$ at iteration $k+1$   satisfies the following mixed linear-quadratic inequality:
     \begin{align}
        & \norm{\bar {x}_{k+1} - x^{\star}} \leq \left(1- \eta(1 -\frac{\gamma}{\mu})\right) \norm{\bar {x}_{k} - x^{\star}} \nonumber \\
        & \;\;\;\;\;\;\; + \frac{\eta \bar L}{\mu \sqrt{N}} \norm{\mathbf{x}_{k} - \mathbf{1} \bar x_{k}} \norm{\bar {x}_{k} - x^{\star}} 
         + \frac{\eta \bar L}{2 \mu} \norm{\bar {x}_{k} - x^{\star}}^2  \nonumber \\
       & \;\;\;\;\;\;\;  + \frac{\eta}{\mu \sqrt{N}} \norm{\mathbf{s}_{k} - \mathbf{1} g_{k}} 
        + \frac{\eta L}{\mu \sqrt{N}} \norm{\mathbf{x}_{k} - \mathbf{1} \bar x_{k}}.
        \label{lin_quad_rate}
     \end{align}
     \label{lin_quad_conv}
    \end{theorem}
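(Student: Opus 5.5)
Averaging the $\mathbf{x}$-update in \eqref{netgalg_global} with $\frac{1}{N}\mathbf{1}^T$ and using double stochasticity of $W$ gives the exact recursion
\begin{align*}
\bar x_{k+1} - x^\star = \bar x_k - x^\star - \frac{\eta}{N}\sum_{i=1}^N s_i^k (\tilde H_i^k)^{-1}.
\end{align*}
We write $s_i^k = g_k + (s_i^k - g_k)$, using $\bar s_k = g_k$. This splits the update into $g_k\,\frac{1}{N}\sum_i (\tilde H_i^k)^{-1} = g_k H_{app}(\mathbf{x}_k)^{-1}$ plus a tracking residual $\frac{1}{N}\sum_i (s_i^k-g_k)(\tilde H_i^k)^{-1}$. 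By $\norm{(\tilde H_i^k)^{-1}}_2\le 1/\mu$ and Cauchy--Schwarz, $\frac{1}{N}\sum_i\norm{s_i^k-g_k}\le \frac{1}{\sqrt N}\norm{\mathbf{s}_k-\mathbf{1}g_k}$, so the residual contributes $\frac{\eta}{\mu\sqrt N}\norm{\mathbf{s}_k-\mathbf{1}g_k}$.

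Next we compare $g_k$ with $\nabla f(\bar x_k)$ (the global gradient at the common point $\bar x_k$). By Assumption \ref{assumption_1}, $\norm{g_k-\nabla f(\bar x_k)}\le \frac{L}{N}\sum_i\norm{x_i^k-\bar x_k}\le \frac{L}{\sqrt N}\norm{\mathbf{x}_k-\mathbf{1}\bar x_k}$. Since $H_{app}^{-1}$ is a mean of matrices with norm at most $1/\mu$, this difference contributes $\frac{\eta L}{\mu\sqrt N}\norm{\mathbf{x}_k-\mathbf{1}\bar x_k}$.

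It remains to bound $(\bar x_k - x^\star) - \eta\,\nabla f(\bar x_k) H_{app}(\mathbf{x}_k)^{-1}$. We write $\nabla f(\bar x_k)-\nabla f(x^\star) = (\bar x_k - x^\star)\int_0^1 \nabla^2 f(x^\star+\lambda(\bar x_k-x^\star))\,d\lambda$ and insert $H_{tr}(\mathbf{x}_k)$:
\begin{align*}
\int_0^1 \nabla^2 f(\cdot)\,d\lambda = H_{tr}(\mathbf{x}_k) + \Big(\nabla^2 f(\bar x_k) - H_{tr}(\mathbf{x}_k)\Big) + \int_0^1\Big(\nabla^2 f(x^\star+\lambda(\bar x_k - x^\star)) - \nabla^2 f(\bar x_k)\Big)d\lambda .
\end{align*}
Each of the three pieces is then multiplied on the right by $H_{app}^{-1}$.

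\emph{Second piece.} By Assumption \ref{hessianlip}, $\norm{\nabla^2 f(\bar x_k)-H_{tr}(\mathbf{x}_k)}\le \frac{\bar L}{N}\sum_i\norm{\bar x_k - x_i^k}\le \frac{\bar L}{\sqrt N}\norm{\mathbf{x}_k-\mathbf{1}\bar x_k}$. This yields the cross term $\frac{\eta\bar L}{\mu\sqrt N}\norm{\mathbf{x}_k-\mathbf{1}\bar x_k}\norm{\bar x_k-x^\star}$.

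\emph{Third piece.} Its norm is at most $\bar L\norm{\bar x_k-x^\star}\int_0^1(1-\lambda)\,d\lambda = \frac{\bar L}{2}\norm{\bar x_k-x^\star}$. This gives the quadratic term $\frac{\eta\bar L}{2\mu}\norm{\bar x_k-x^\star}^2$.

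\emph{First piece.} This is the main step. We write $H_{tr}H_{app}^{-1} = \mathbb{I} + (H_{tr}-H_{app})H_{app}^{-1}$, so that
\begin{align*}
(\bar x_k-x^\star)\big(\mathbb{I}-\eta H_{tr}H_{app}^{-1}\big) = (1-\eta)(\bar x_k-x^\star) - \eta(\bar x_k-x^\star)(H_{tr}-H_{app})H_{app}^{-1}.
\end{align*}
By Assumption \ref{hessapprox} and $\norm{H_{app}^{-1}}_2\le 1/\mu$, its norm is at most $(1-\eta+\eta\gamma/\mu)\norm{\bar x_k-x^\star}$. For this we need $\eta<1$, so that $1-\eta>0$; the hypothesis $\gamma<\mu$ only makes the resulting rate less than one.

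The main obstacle is this first piece. A careless bound on $\norm{\mathbb{I}-\eta H_{tr}H_{app}^{-1}}_2$ by eigenvalue arguments would fail, because the product is not symmetric. The additive splitting around $\mathbb{I}$ avoids this. We also need to check that the Frobenius-norm bound on $H_{tr}-H_{app}$ dominates the spectral norm, which it does. Finally, collecting the five contributions with the triangle inequality gives exactly \eqref{lin_quad_rate}.
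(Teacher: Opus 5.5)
Your proposal is correct and follows essentially the same route as the paper. Like the paper, you split off the tracking/consensus residual (the paper's $S_1$, bounded by $\frac{\eta}{\mu\sqrt N}\norm{\mathbf{s}_k-\mathbf{1}g_k}+\frac{\eta L}{\mu\sqrt N}\norm{\mathbf{x}_k-\mathbf{1}\bar x_k}$ exactly as you do), write the remaining map as $(1-\eta)\mathbb{I}$ plus $\eta$ times $(H_{app}-\delta)H_{app}^{-1}$ (the paper has the transposed form $\eta H_{app}^{-1}(H_{app}-\delta)$, since it works with column vectors), where $\delta$ is the integrated Hessian, and bound $\norm{H_{app}-\delta}\le \gamma+\frac{\bar L}{\sqrt N}\norm{\mathbf{x}_k-\mathbf{1}\bar x_k}+\frac{\bar L}{2}\norm{\bar x_k-x^\star}$ by the same three-way triangle inequality. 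The remaining differences are cosmetic: you pass through $g_k$ before $\nabla f(\bar x_k)$, and you split $\delta$ at the vector level rather than inside a single norm.
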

 \begin{proof}
        Please refer to Appendix \ref{appen:th_3} for a proof.
    \end{proof}

    It was illustrated with a comprehensive set of numerical results in 
    \cite{alessio_net_giant} that Network-GIANT outperforms first-order optimization algorithms based on gradient tracking and several other distributed optimization methods using different types of Hessian approximation techniques, such as Network-DANE \cite{li2020communication}, and various other Network-Newton methods, including the one presented in \cite{mokhtari2016network}. While it is generally difficult to compare exact convergence rates of various distributed Newton type methods, Theorem 
    \ref{lin_quad_conv} explains why Network-GIANT provides a faster linear convergence rate than the gradient tracking based first order algorithm, e.g., in \cite{li_na_gradient_tracking}. The next result establishes this fact formally:
    \begin{theorem}
        \label{th:linear-quadratic rate}
        Consider the consensus update iteration in Theorem \ref{th:linear rate}, along with the iterations of the optimality gap error recursion from \eqref{lin_quad_rate}, denoted by \(\optgaperr_k = \norm{\bar {x}_{k} - x^{\star}}\) for each \(k\). Let Assumptions \ref{assumption_1}, \ref{hessianlip}, and \ref{hessapprox} hold. Recall that \(\kappa= \frac{L}{\mu}\). 
        If \(\gamma < \mu\), then we have
        \begin{align}\label{eq:geo_rate}
        \limsup_{k \ra + \infty} \bigg(\frac{\log \optgaperr_k}{k+1} \bigg) \le \log \bigg(1 - \eta \Big(1 - \frac{\gamma}{\mu} \Big) \bigg). 
        \end{align}
    \end{theorem}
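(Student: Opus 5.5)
We will combine the global linear convergence of Theorems \ref{lin_convergence} and \ref{th:linear rate} with the recursion \eqref{lin_quad_rate} of Theorem \ref{lin_quad_conv}. We read the statement as holding for $\eta<\bar\eta$, so that $\rho(G(\eta))<1$; this is what ``the consensus update iteration in Theorem \ref{th:linear rate}'' refers to. Write $a_k=\norm{\mathbf{x}_k-\mathbf{1}\bar x_k}$ and $b_k=\norm{\mathbf{s}_k-\mathbf{1}g_k}$, and set $r=1-\eta(1-\gamma/\mu)\in(0,1)$, which uses $\gamma<\mu$ and $\eta<1$.

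\textbf{Step 1: geometric decay of all three errors.} Iterating \eqref{norminequality} gives $(a_k,b_k,\sqrt N\optgaperr_k)^T\le G(\eta)^k v_0$ componentwise, because $G(\eta)$ has nonnegative entries. By Gelfand's formula, for every $\epsilon>0$ there is $C_\epsilon$ with $a_k,b_k,\optgaperr_k\le C_\epsilon(\rho+\epsilon)^k$, where $\rho=\rho(G(\eta))<1$. In particular $\optgaperr_k\to0$ and $a_k\to0$.

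\textbf{Step 2: rewrite \eqref{lin_quad_rate} as a perturbed linear recursion.} We write \eqref{lin_quad_rate} as
\begin{align*}
\optgaperr_{k+1}\le (r+\delta_k)\,\optgaperr_k+e_k,
\end{align*}
where
\begin{align*}
\delta_k=\frac{\eta\bar L}{\mu\sqrt N}a_k+\frac{\eta\bar L}{2\mu}\optgaperr_k,\qquad
e_k=\frac{\eta}{\mu\sqrt N}b_k+\frac{\eta L}{\mu\sqrt N}a_k .
\end{align*}
By Step 1, $\delta_k\to0$ and $e_k\le C'(\rho+\epsilon)^k$.

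Fix $\epsilon'>0$ and pick $K$ such that $\delta_k\le\epsilon'$ for all $k\ge K$. Unrolling the recursion from $K$ gives
\begin{align*}
\optgaperr_k\le (r+\epsilon')^{k-K}\optgaperr_K+\sum_{j=K}^{k-1}(r+\epsilon')^{k-1-j}e_j .
\end{align*}
The convolution sum is bounded by $C''k\max(r+\epsilon',\rho+\epsilon)^k$. Taking logarithms, dividing by $k+1$ and letting $k\to\infty$ then gives
\begin{align*}
\limsup_{k\to\infty}\frac{\log\optgaperr_k}{k+1}\le\log\max(r+\epsilon',\rho+\epsilon).
\end{align*}

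\textbf{Step 3: the main obstacle, removing the $\rho$ term.} The bound of Step 2 only yields $\log\max(r,\rho)$. The forcing term $e_k$ decays at rate $\rho$, and $\rho$ may exceed $r$; as written, \eqref{lin_quad_rate} cannot beat the decay of its own forcing term. To obtain the stated rate $\log r$ we must show that the consensus and tracking errors decay faster than $r$, or at least no slower.

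Our first attempt is to exploit the structure of $G(\eta)$. The $2\times2$ consensus/tracking block has spectral radius approximately $\sigma+O(\eta)$. The optimality-gap diagonal entry in the actual local recursion is $r$ rather than $1-\eta/\kappa$. Near the optimum we would therefore form a refined $3\times3$ comparison matrix, replacing the $(3,3)$ entry of $G(\eta)$ by $r+\delta_k$, whose spectral radius tends to $\max(r,\sigma+O(\eta))$ as $\delta_k\to0$. For small $\eta$ we have $r\approx1-\eta(1-\gamma/\mu)>\sigma+c\eta$, so the Perron eigenvalue is $r+O(\eta^2)$. One then argues the perturbation is negligible, or that the dominant eigenvalue equals $r$ in the limit.

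If this refined comparison cannot be made exact, the fallback is to state the result under the additional hypothesis $\rho(G(\eta))\le r$. That hypothesis is plausible since $1-\eta/\kappa\ge r$ fails only when $\gamma>\mu-\mu/\kappa$. Under it, Step 2 directly yields \eqref{eq:geo_rate} after letting $\epsilon,\epsilon'\to0$.
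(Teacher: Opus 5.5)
Your proposal does not prove the statement. Steps 1--2 are sound, but they only give \(\limsup_{k}\frac{\log\optgaperr_k}{k+1}\le\log\max\bigl(r,\rho(G(\eta))\bigr)\), and neither route in Step 3 closes the gap. For \(\eta>0\) the matrix \(G(\eta)\) is entrywise positive, so by Perron--Frobenius \(\rho(G(\eta))\) strictly exceeds its \((3,3)\) entry \(1-\eta/\kappa\). Moreover \(1-\eta/\kappa\ge r\) precisely when \(\gamma\le\mu(1-1/\kappa)\). So in the regime where the theorem has content (small \(\gamma\), rate near \(1-\eta\)), your bound is just \(\log\rho(G(\eta))\), no better than Theorem~\ref{th:linear rate}. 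The refined comparison fails for the same reason. Replacing the \((3,3)\) entry by \(r+\delta_k\) leaves the positive coupling entries \(\eta\kappa\) and \(\eta/\mu\) in the third row, so the comparison matrices converge to \(\hat G(\eta)\), whose spectral radius is \emph{strictly} larger than \(r\) for every fixed \(\eta>0\). The \(O(\eta^2)\) excess is a fixed gap that does not vanish as \(k\to\infty\); this route yields the rate \(\rho(\hat G(\eta))\) discussed before Theorem~\ref{th:combine analysis}, not \eqref{eq:geo_rate}. (Also, the consensus/tracking block has spectral radius \(\sigma+\eta\kappa+\sqrt{2\eta\kappa+\eta^2\kappa^2}=\sigma+O(\sqrt{\eta})\), not \(\sigma+O(\eta)\).) The fallback is vacuous rather than a mild restriction. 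Since \(\rho(G(\eta))>1-\eta/\kappa\), the hypothesis \(\rho(G(\eta))\le r\) can only hold when \(\gamma>\mu(1-1/\kappa)\), so your plausibility remark runs the implication backwards. And whenever the hypothesis holds, the conclusion already follows from Step 1 alone, without using \eqref{lin_quad_rate} at all.

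The paper's proof has the same skeleton as your Step 2: eventual coefficient \(r+\bar\epsilon_k\) with \(\bar\epsilon_k\to\epsilon\), product of the factors \(\xi_k\), Ces\`aro mean of \(\log\xi_k\), then \(\epsilon\to 0\). It disposes of the forcing term exactly where you got stuck. It asserts that, for all large \(k\), the exponential envelopes of \(a_k\) and \(b_k\) from Theorem~\ref{lin_convergence} are at most \(\bar c_1\optgaperr_k^2\) and \(\bar c_2\optgaperr_k^2\). Then your \(e_k\) is absorbed into the quadratic term and the recursion becomes homogeneous, \(\optgaperr_{k+1}\le\xi_k\optgaperr_k\). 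That domination is the ingredient missing from your proposal; with it, your Step 2 closes with \(e_k\) folded into \(\delta_k\). Be aware, however, that the paper asserts it rather than derives it. If \(a_k\le C\theta^k\) is the envelope, the claim amounts to a lower bound \(\optgaperr_k\ge c\,\theta^{k/2}\). That is a separation of rates that Theorems~\ref{lin_convergence}--\ref{th:linear rate} do not provide, since they bound all three errors by the same \(\rho(G(\eta))^k\). Nor should it be expected in general. The consensus error is forced at first order by \(\eta(\mathbb{I}-\frac{1}{N}\mathbf{1}\mathbf{1}^T)\mathbf{y}_k\), whose rows, even with perfect tracking, are \(\eta\, g_k\bigl((\tilde H_i^k)^{-1}-\frac{1}{N}\sum_{j}(\tilde H_j^k)^{-1}\bigr)\). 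When the local Hessians differ, these are typically of order \(\eta\optgaperr_k\), not \(\optgaperr_k^2\). So your diagnosis of the obstacle is accurate. To actually obtain \eqref{eq:geo_rate}, you must either add that domination as an explicit hypothesis or analyze the exact iteration (e.g.\ its linearization at the fixed point) rather than the norm inequalities.
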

    A proof of Theorem \ref{th:linear-quadratic rate} is provided in Appendix \ref{appen:proof_linear-quad-rate}.
    \begin{remark}
        Note that  \eqref{eq:geo_rate} implies that the sequence \((\optgaperr_k)_{k \in \Nz}\) asymptotically decays geometrically at the rate  $\Big(1 - \eta \big(1 - \frac{\gamma}{\mu} \big) \Big)$. In other words, Theorem \ref{th:linear-quadratic rate} asserts that the mixed linear-quadratic convergence rate of \eqref{lin_quad_rate} is asymptotically dominated by the linear term in \eqref{lin_quad_rate}.
    \end{remark}
    \begin{remark}
        \label{rem:misc}
         When the hessian approximation error $\gamma$ is sufficiently small compared to the strong convexity constant $\mu$, then this linear convergence rate is approximately $(1-\eta)$, which is much faster than the corresponding rate of the gradient tracking based first order method, given by approximately $(1-\eta \mu)$ in the third diagonal entry of $\bar G(\eta)$.
    \end{remark}
    A further local convergence analysis near the neighborhood of the origin $(0,0,0)$ for the three error norms given by 
    $\tilde e^c_k = \norm{\mathbf{x}_{k} - \mathbf{1} \bar x_{k}}$, $\tilde e^g_k = \norm{\mathbf{s}_{k} - \mathbf{1} g_{k}}$, and 
    $\optgaperr_k$ defined in Theorem \ref{th:linear-quadratic rate} can be done by combining the linear inequalities for 
    $\tilde e^c_k$ and $\tilde e^g_k$ from \eqref{norminequality} and the mixed linear-quadratic inequality \eqref{lin_quad_rate}, which yields the following  system of nonlinear recursive inequalities:
    \begin{align}
        & \begin{bmatrix} \tilde e^c_{k+1}\\ \tilde e^g_{k+1} \\ \sqrt{N}\optgaperr_{k+1} \end{bmatrix} \leq
        \hat G(\eta) \begin{bmatrix} \tilde e^c_{k}\\ \tilde e^g_{k} \\ \sqrt{N}\optgaperr_{k} \end{bmatrix} \nonumber \\
        &  + \begin{bmatrix} 0 \\ 0 \\ 1 \end{bmatrix} \left(\frac{\eta \bar L}{\mu} \tilde e^c_k \optgaperr_{k}
         + \frac{\eta \bar L}{2 \mu} \sqrt{N} \optgaperr^2_{k} \right), \label{sys_nonlinear} \end{align}
         where $$ \hat G(\eta)  = \begin{bmatrix}
                \sigma+ \eta \frac{L}{\mu} & \frac{\eta}{\mu} & \eta \frac{L}{\mu} \\
                2L + \eta \frac{L^2}{\mu} & \sigma+ \eta \frac{L}{\mu} & \eta\frac{L^2}{\mu} \\
                \eta \frac{L}{\mu} & \frac{\eta}{\mu} & 1- \eta (1 - \frac{\gamma}{\mu})
        \end{bmatrix}  $$           
  By taking partial derivatives of the right hand side of the inequality \eqref{sys_nonlinear} with respect to the three error norms, one can construct the Jacobian at the origin as exactly $\hat G(\eta)$. Therefore, the local convergence rate to the origin from within a small neighborhood of it is given by the spectral radius of $\hat G(\eta)$. By conducting a similar analysis to the proof of Theorem \ref{th:linear rate}, we can prove the following result on the local convergence rate to the origin of the three error norms (details are omitted due to space reasons):
    \begin{theorem}
        \label{th:combine analysis} The system of nonlinear inequalities \eqref{sys_nonlinear} is asymptotically locally convergent to the origin   if $\rho(\hat G(\eta)) < 1$, which can be guaranteed by choosing 
        \begin{align}
        \eta  <  \hat \eta = \frac{(1-\sigma)^2} {2(2-\sigma)} \left[ \frac{\omega}{\omega \kappa + \kappa^2} \right], \end{align} where 
        $\omega = (1-\frac{\gamma}{\mu})$. 
    \end{theorem}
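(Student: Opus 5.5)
The plan has two parts: a local stability argument for the monotone nonlinear comparison system \eqref{sys_nonlinear}, and a verification that $\eta<\hat\eta$ forces $\rho(\hat G(\eta))<1$. I expect the second part to be a transcription of the proof of Theorem \ref{th:linear rate}, because $\hat G(\eta)$ differs from $G(\eta)$ only in its $(3,3)$ entry: $1-\eta\frac{\mu}{L}=1-\eta\frac{1}{\kappa}$ is replaced by $1-\eta\omega$. Indeed $\bar\eta = \frac{(1-\sigma)^2}{2(2-\sigma)}\cdot\frac{1}{\kappa+\kappa^2/(1/\kappa)}$ and $\hat\eta = \frac{(1-\sigma)^2}{2(2-\sigma)}\cdot\frac{1}{\kappa+\kappa^2/\omega}$, so $\hat\eta$ is exactly $\bar\eta$ with $1/\kappa$ replaced by $\omega$.

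\emph{Spectral radius.} Since $\hat G(\eta)$ is entrywise positive, Perron--Frobenius (equivalently, the M-matrix characterization of $\mathbb{I}-\hat G(\eta)$) gives $\rho(\hat G(\eta))<1$ if and only if the leading principal minors of $\mathbb{I}-\hat G(\eta)$ are positive. Write $a=1-\sigma-\eta\kappa$. The conditions are:
\begin{itemize}
\item $a>0$;
\item $a^2-\frac{\eta}{\mu}\bigl(2L+\eta\frac{L^2}{\mu}\bigr)>0$;
\item $\det(\mathbb{I}-\hat G(\eta))>0$.
\end{itemize}
The determinant equals $\eta\omega$ times the $2\times2$ minor, minus terms that are $O(\eta^2)$ with coefficients built from $\kappa,\kappa^2,\kappa^3$ and $\sigma$. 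I would follow the Appendix proof of Theorem \ref{th:linear rate} line by line with $1/\kappa$ replaced by $\omega$.
\begin{itemize}
\item Using $\eta<1$, bound the $O(\eta^2)$ terms by $O(\eta)$ multiples.
\item Use $\eta\kappa\le$ a fraction of $(1-\sigma)$ to lower bound $a$ by a multiple of $(1-\sigma)$.
\item Reduce $\det>0$ to a linear inequality of the form $\eta\,2(2-\sigma)(\omega\kappa+\kappa^2)<\omega(1-\sigma)^2$, which is exactly $\eta<\hat\eta$.
\end{itemize}
It remains to check that $\eta<\hat\eta$ also implies the first two minor conditions. This follows because $\hat\eta\le\frac{(1-\sigma)^2}{4\kappa^2}$, using $\omega\le1$ and $\kappa\ge1$.

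\emph{Local convergence.} Let $e_k=(\tilde e^c_k,\tilde e^g_k,\sqrt N\optgaperr_k)^T\ge0$. The right-hand side of \eqref{sys_nonlinear} is nondecreasing in each nonnegative argument, so by induction $e_k\le z_k$ componentwise, where $z_k$ solves the corresponding equality recursion with $z_0=e_0$. Fix $\epsilon>0$ with $\rho(\hat G)+\epsilon<1$. Let $v>0$ be the Perron vector of $\hat G$ and use the weighted max-norm $\|z\|_v=\max_i z_i/v_i$, whose induced norm of $\hat G$ equals $\rho(\hat G)$. The quadratic term is bounded by $C\|z\|_v^2$ for a constant $C$ depending on $\eta,\bar L,\mu,N,v$. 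Hence if $\|z_0\|_v\le\delta:=\epsilon/C$, then $\|z_{k+1}\|_v\le(\rho+\epsilon)\|z_k\|_v$. The ball of radius $\delta$ is therefore invariant and $z_k\to0$ geometrically, so $e_k\to0$.

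\emph{Main obstacle.} I expect the hard step to be the determinant bookkeeping: getting the precise constant $2(2-\sigma)$ and the form $\omega\kappa+\kappa^2$. This requires careful choice of which cross terms to absorb using $\eta<1$ and $\omega\le1$. If the direct substitution into the Theorem \ref{th:linear rate} argument does not close, I would instead exhibit an explicit test vector $v=(v_1,v_2,v_3)>0$ with $\hat G(\eta)v<v$, choosing $v_1=1$, $v_2\approx\frac{2L+\eta L^2/\mu}{1-\sigma}$, and $v_3$ large. I would then read off the resulting sufficient condition on $\eta$ and compare it with $\hat\eta$.
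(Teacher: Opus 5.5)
Your overall route is valid and differs from the paper's. The paper only says to mimic its proof of Theorem~\ref{th:linear rate}, which works by eigenvalue continuity:
at $\eta=0$ the eigenvalues of $\hat G$ are $1,\sigma,\sigma$;
implicit differentiation of the characteristic polynomial at $(\eta,\lambda)=(0,1)$ gives $\partial\lambda/\partial\eta=-\omega<0$;
setting $\lambda=1$ leaves only the roots $\eta=0$ and $\eta=\hat\eta$, so the Perron root cannot return to $1$ on $(0,\hat\eta)$.
For the local claim, the paper merely notes that the Jacobian of the right-hand side of \eqref{sys_nonlinear} at the origin is $\hat G(\eta)$. Your leading-minor (M-matrix) test replaces the perturbation and root-counting step with three algebraic sign checks, and it also shows that $\eta<\hat\eta$ is necessary for $\rho(\hat G(\eta))<1$. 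Your monotone comparison with the Perron-weighted max-norm actually proves local convergence for a system of \emph{inequalities}, which the paper's linearization remark leaves implicit.

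Two steps of your plan would not go through as written. First, the determinant. Absorbing $O(\eta^2)$ terms via $\eta<1$, or lower-bounding $a=1-\sigma-\eta\kappa$ by a multiple of $1-\sigma$, can only give a threshold strictly smaller than $\hat\eta$, so it would not prove the statement. The appendix argument you intend to transcribe does not bound anything either; it solves $\det(\mathbb{I}-G(\eta))=0$ exactly. Nothing needs bounding. Using $\frac{\eta}{\mu}\big(2L+\eta\frac{L^2}{\mu}\big)=2\eta\kappa+\eta^2\kappa^2$ and $\frac{\eta}{\mu}\cdot\eta\frac{L^2}{\mu}=\eta^2\kappa^2$, all $\eta^3$ terms cancel, including every $\kappa^3$ term, and
\begin{align*}
\det\big(\mathbb{I}-\hat G(\eta)\big)
&=\eta\omega\big[(1-\sigma)^2-2(2-\sigma)\kappa\eta\big]-2(2-\sigma)\kappa^2\eta^2\\
&=\eta\big[\omega(1-\sigma)^2-2(2-\sigma)(\omega\kappa+\kappa^2)\eta\big].
\end{align*}
The first bracket is exactly the $2\times 2$ leading minor.

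Second, your claim $\hat\eta\le(1-\sigma)^2/(4\kappa^2)$ is false. For $\sigma=0.9$, $\kappa=10$, $\omega=1$ one gets $\hat\eta=0.01/242\approx 4.1\times10^{-5}>2.5\times10^{-5}$. Use the identity above instead: since $\eta\omega>0$, $\det>0$ forces the $2\times2$ minor to be positive, i.e.\ $\eta<\frac{(1-\sigma)^2}{2(2-\sigma)\kappa}$. This is below $\frac{1-\sigma}{\kappa}$, which gives $a>0$. With these corrections your argument is complete.
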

    
    \begin{remark}
        \label{rem:combine}
        One can show that the threshold $\hat \eta$ is strictly greater than $\bar \eta$, (the upper bound of \(\eta\) obtained in Theorem 2), if $\omega > \frac{1}{\kappa}$, or 
        $\gamma < \mu (1 -\frac{\mu}{L})$. This implies that for local convergence to the origin for the system of three error norms, one can have a larger range of $\eta$, provided the Hessian approximation error $\gamma < \mu (1 -\frac{\mu}{L}) < \mu$. Hence, $\hat \eta$ is also an upper bound for the choice of $\eta$ for the asymptotic (local) linear convergence rate of 
        $\big(1-\eta(1 - \frac{\gamma}{\mu})\big)$ for the optimality gap to zero. 
    \end{remark}
    \begin{remark}[Time-varying or directed graphs]
        While the above theoretical results were presented for a time-invariant undirected graph, extensions to time-varying or directed graphs are certainly possible. For example, using the $B$-connected graph sequence assumption of time-varying double stochastic consensus matrices in \cite{nedic_tv_siam}, linear convergence rates can also be established for Network-GIANT. In the case of time-invariant directed graphs represented by row-stochastic matrices, we can leverage the techniques from \cite{ref:CX-VSX-18} to establish linear convergence for Network-GIANT. The extension to the more complex case of time-varying directed graphs with Network-GIANT will be investigated later.
    \end{remark}
    In the next section, we will illustrate the above results with numerical illustrations using a logistic regression-based binary classification task with the {\em CovType} data set. We will compare Network-GIANT and 
    the gradient tracking based first order method from \cite{li_na_gradient_tracking}, as well as a Nesterov-acceleration based gradient tracking algorithm ACC-NGD-SC \cite{ref:GQ-NL-19}.  To avoid repetition with \cite{alessio_net_giant}, we do not include comparative performance results for other second order approximate Newton algorithms. 
    \section{Numerical Analysis}
    In this section, we empirically investigate several important properties of Network-GIANT. We begin by verifying its asymptotic convergence behavior and demonstrate that, provided the Hessian approximation error is sufficiently small, the optimality-gap error converges asymptotically at a rate of \(1 - \stsz\). We then study the scalability of Network-GIANT with respect to the number of agents for Erd\H{o}s--R'enyi graphs. Next, we provide a detailed comparison with two first-order algorithms in terms of both convergence rate and wall-clock-time performance. Finally, we examine the impact of data heterogeneity on the convergence behavior of Network-GIANT and on the quality of its Hessian approximation. All experiments were conducted on a MacBook Pro M4 with a \(10\)-core processor, \(24\)GB of internal memory, and a clock speed of \(4.61\) GHz.

\subsection*{Experimental setup}For the experiments, we considered a binary logistic classification problem given by the distributed optimization problem
\begin{align}
    \label{eq:log_bin}
    \min_{\dvar \in \Rbb^{\dimdv}} \frac{1}{\agents}\sum_{i=1}^{\agents} \frac{1}{m}\sum_{j=1}^{m} \log\Big( 1 + \exp\big(-v^i_j(\dvar^{\top}u^i_j) \big)\Big) + \frac{\tilde \regu}{2} \norm{\dvar}^2,
\end{align}
where $\tilde \lambda$ is the regularizer chosen to be \(0.01\) across all experiments. We performed distributed classification on a reduced CovType dataset \cite{ref:DD-CG-19}, consisting of \(566602\) sample points and \(n = 10\) features. After random reshuffling to minimize bias, the samples $(u_j^i, v_j^i)$ were uniformly distributed across the $N$ nodes with ($m$ samples each). The variables \(\textbf{x}(0)\) were initialized uniformly randomly, and we imposed \(\grd f_i(x_i(0)) = s_i(0)\) for each \(i = \aset[]{1,2,\ldots, N}\).

The experiments were conducted on \(d\)-regular expander graphs and Erd\H{o}s-R\'enyi graphs \(\big(\text{denoted by} G(N,p)\big)\) \cite{erdos59a} with \(N=100\) nodes and different graph parameters. Here \(d\) and \(p\) denote the degree of the expander graph and the edge/connection probability associated with the Erd\H{o}s-R\'enyi graph, respectively, and \(\sigma\) is the spectral radius of these graphs. 
\begin{table}[htbp]
\centering
\begin{adjustbox}{max width=\linewidth}
\begin{tblr}{
  colspec = {l|cc|cc},
  row{1} = {font=\bfseries},
  hlines,
}
Graph Type 
  & \(d\) / \(p\) 
  & $\sigma$ 
  & \(d\) / \(p\)
  & $\sigma$ \\

Expander 
  & $d = 20$  & $0.9256$ 
  & $d = 72$ & $0.3178$ \\

Erd\H{o}s--R\'enyi 
  & $p = 0.07$  & $0.8103$ 
  & $p = 0.28$ & $0.4466$ \\
\end{tblr}
\end{adjustbox}
\caption{Spectral norm ($\sigma$) for different graph parameters.}
\label{tab:graph_type}
\end{table}

We assumed that each agent can exchange data with their \(1\)-hop neighbors. The consensus weight matrices of these graphs were constructed using \cite{ref:LX-SB-SJK-07}. 

\subsection*{Results and discussions}
In the following experiments, the true optimal solution \(\dvar^{\ast}\) and the corresponding optimal value \(\objfunc(\dvar^{\ast})\), used as baselines, are obtained from the centralized Newton-Raphson algorithm with backtracking line search.

Figure \ref{fig:n_giant_errors} displays the convergence of different errors, namely, the consensus error (CE), the gradient-tracking error (Grad-T), and the optimality gap error (OE), for two different graph configurations of \(d\)-regular expander graphs. To generate Figure \ref{fig:n_giant_errors},  we choose a well connected graph with \((d, \sigma) = (72, 0.3178)\) and a sparsely connected graph with \((d, \sigma)=(20, 0.9256)\), where \(\eta\) was chosen (optimized empirically) to be \(0.065\) and \(0.045\), respectively.
We observed that for small stepsize (\(\stsz\)) choices, the expander graph with \(d=72\) exhibits a higher degree of connectivity when compared to \(d = 20\), and therefore, a lower spectral norm \(\sigma\). Consequently, the consensus error dynamics and the gradient-tracking error attain small values earlier than the optimality gap error, which decays at a slower rate of \(1 - \stsz\frac{\mu}{L}\). For graphs with a lower degree of connectivity, faster convergence is not distinctly observed, as shown in Figure \ref{fig:n_giant_errors}.  
\begin{figure}[!htbp]
    \centering
    \includegraphics[scale = 0.15]{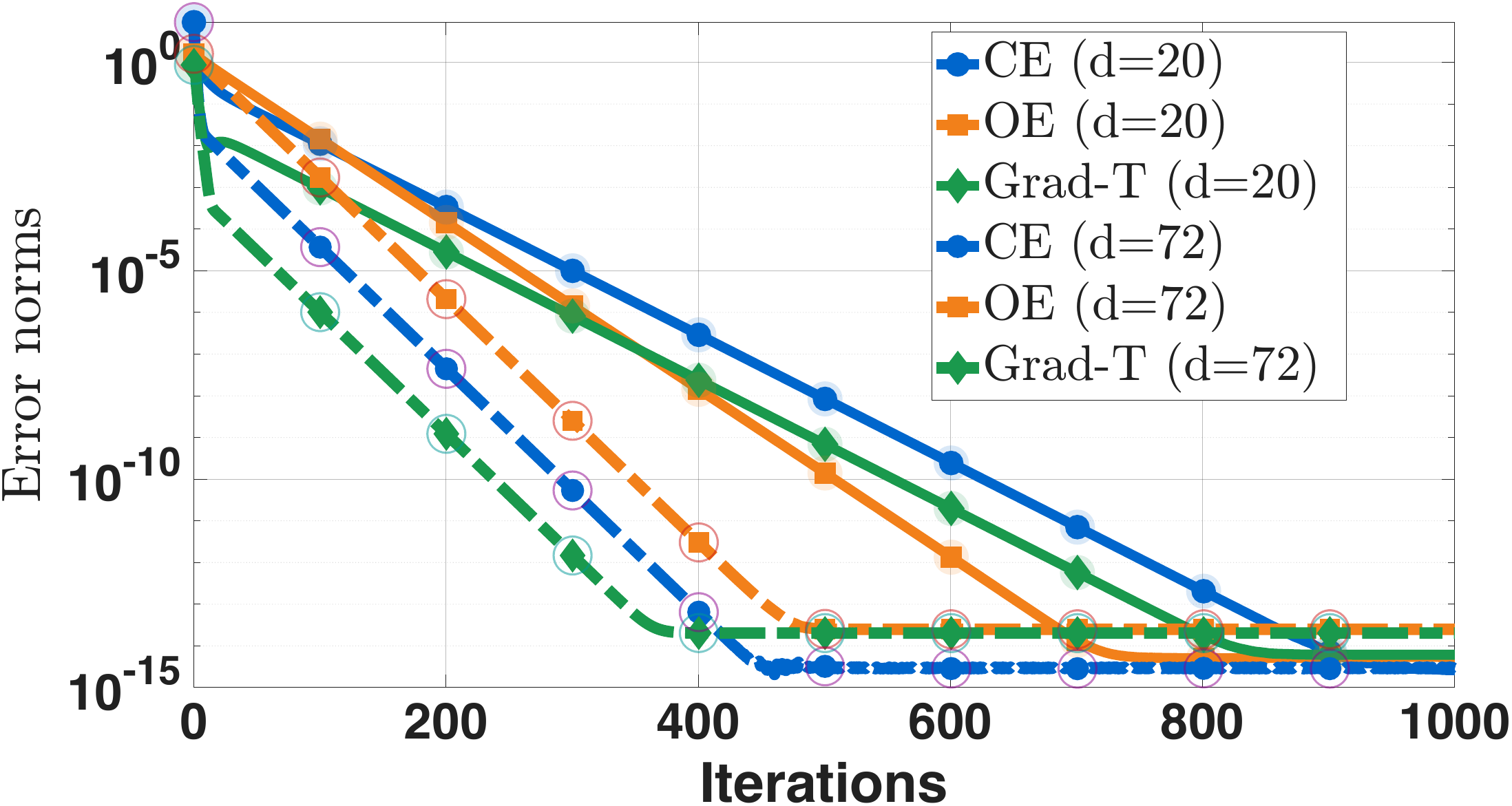}
    \caption{Errors associated with \(\ngiant\). Here `CE', `OE', and `Grad-T', are short forms for consensus error, optimality gap error, and gradient-tracking error, respectively.}
    \label{fig:n_giant_errors}
\end{figure}
We further observe that when the consensus and gradient tracking errors converge to sufficiently small values, the linear term in the optimality error gap dominates the quadratic term at a later stage of consensus when \(\bar{x}_k\) lies in a neighborhood of \(x^{\ast}\).
\subsubsection*{Empirical evidence for improved rate of convergence}
Figures \ref{fig:roc_ex} and \ref{fig:roc_er} illustrate that for small \(\stsz\), the ratio \(r_k\), defined by 
\begin{equation}\label{eq:roc}
    r_k \coloneqq \frac{\norm{\mathbf{x}_{k+1} - x^{\ast}}}{\norm{\mathbf{x}_{k} - x^{\ast}}} \le 1 - \stsz + \frac{\stsz \gamma}{\mu} \approx 1 - \stsz \text{ for each }k,
\end{equation}
provided that the Hessian approximation error \(\gamma\) is sufficiently small. Two observations are noteworthy here. First, we report that for both the graph models, namely \(d\)-regular expander graphs and Erd\H{o}s-R\'enyi graphs, \((r_k)_{k \in \Nz}\) converges asymptotically approximately to the limiting value of \(1 - \stsz\), provided \(\stsz\) and the Hessian approximation error are small. This is consistent with the assertion in Theorem \ref{th:linear-quadratic rate}. 
\begin{figure}[!htbp]
    \centering

    \begin{subfigure}{\textwidth}
        \centering

        \begin{subfigure}{\columnwidth}
            \includegraphics[scale = 0.18]{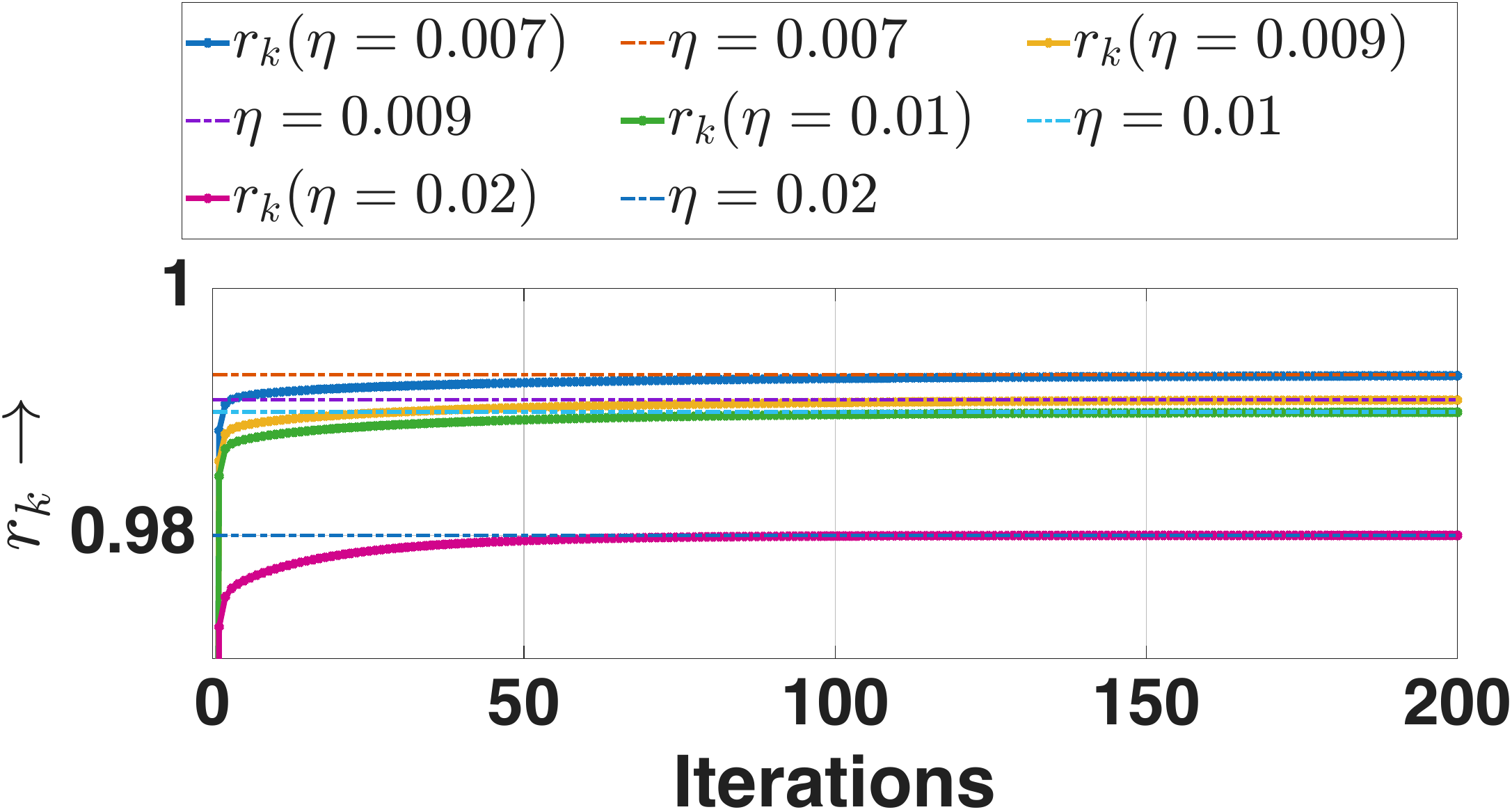}
            \label{subfig:d2_expander}
        \end{subfigure}\vspace{1.5mm}
        \begin{subfigure}{\columnwidth}
            \includegraphics[scale = 0.18]{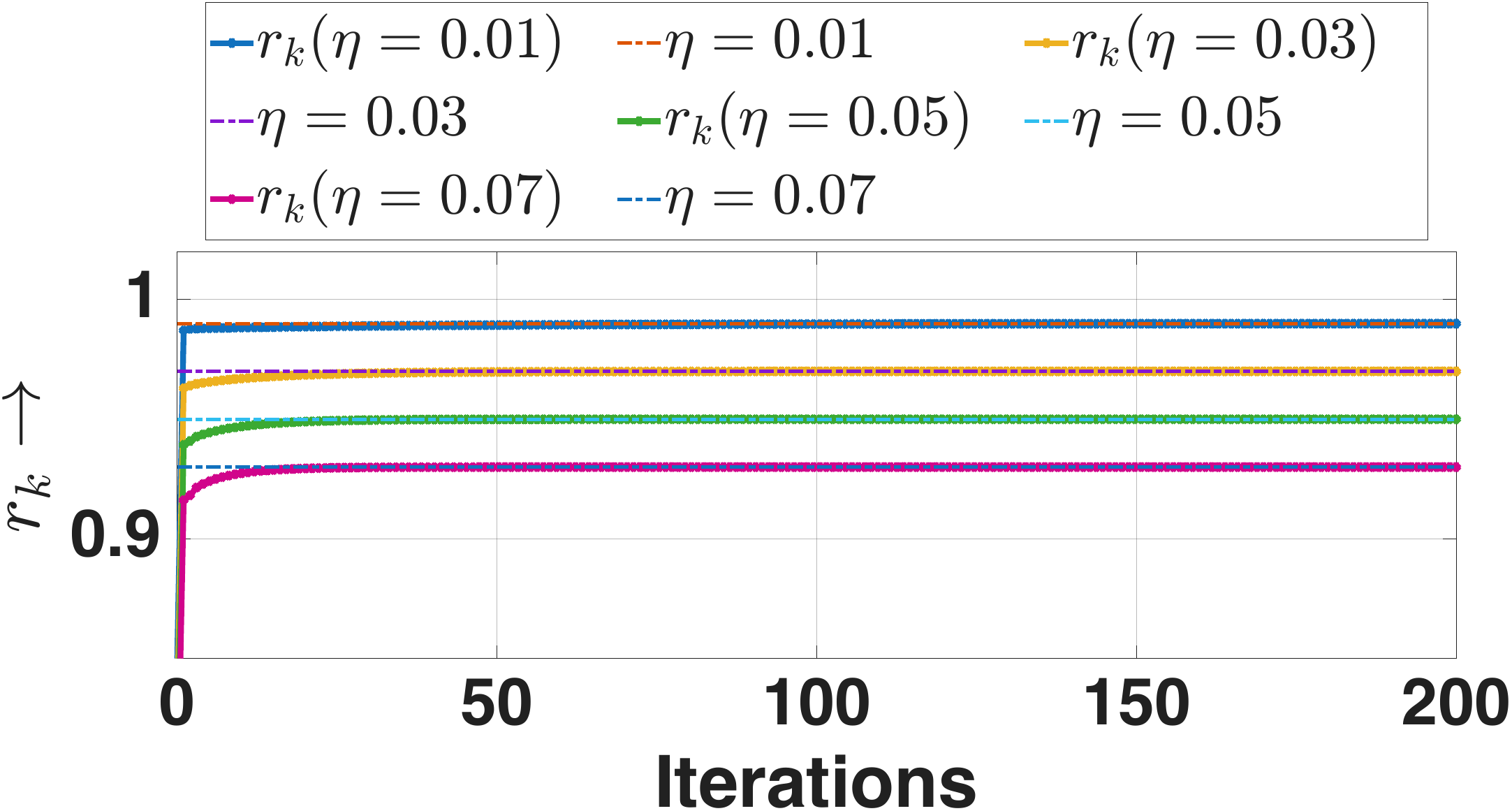}
            \label{subfig:d14_expander}
        \end{subfigure}
        \label{subfig:row1_expander}
    \end{subfigure}

    \caption{Rate of convergence \eqref{eq:roc} for \(d\)-regular  expander graph with \(d = 20\) (top) and \(d = 72\) (bottom).}
    \label{fig:roc_ex}
\end{figure}
For graphs with a higher degree of connectivity, a locally asymptotic approximate convergence rate of $1 -\stsz $ is seen in Figures \ref{fig:roc_ex} and \ref{fig:roc_er} (both bottom figures), even for \(\eta = 0.07\).
\begin{figure}[!htbp]
    \centering

    \begin{subfigure}{\textwidth}
        \centering
        \begin{subfigure}{\columnwidth}
            \includegraphics[scale = 0.18]{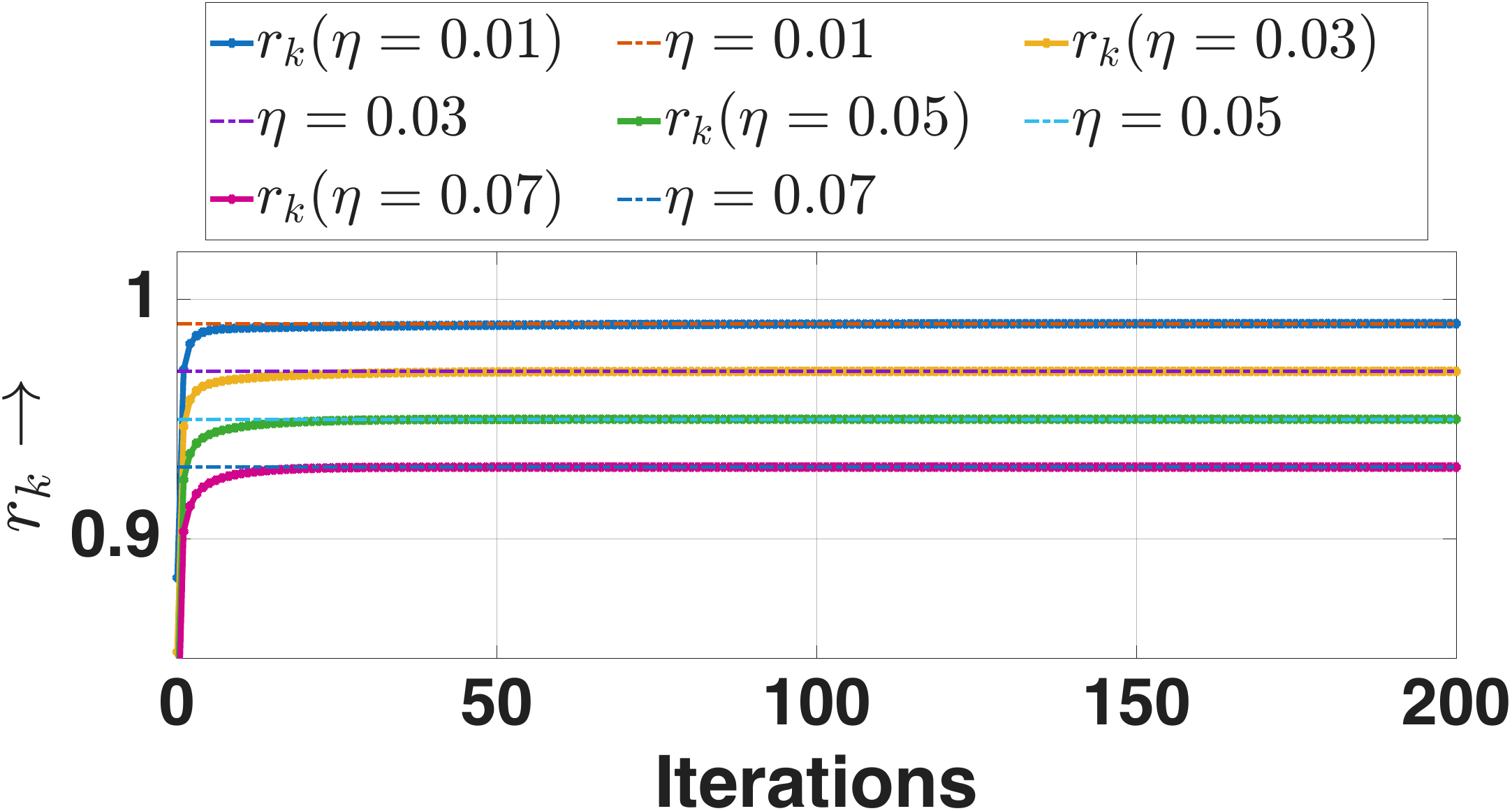}
            \label{subfig:p02_er}
        \end{subfigure}\vspace{1.5mm}
        \begin{subfigure}{\columnwidth}
            \includegraphics[scale = 0.18]{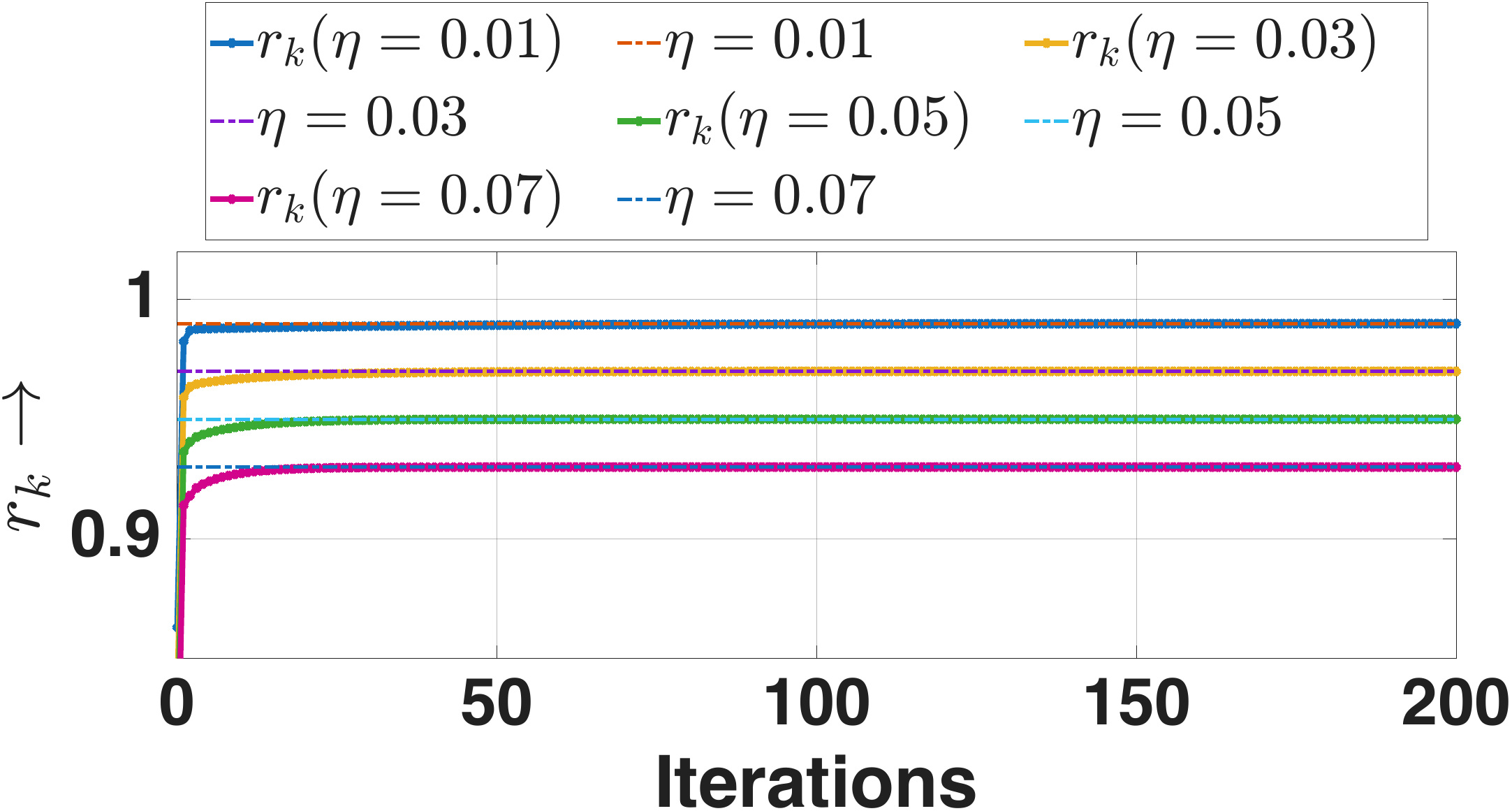}
            \label{subfig:p075_er}
        \end{subfigure}
        \label{subfig:row2_er}
    \end{subfigure}
    \caption{Rate of convergence \eqref{eq:roc} for Erd\H{o}s–R\'enyi  graph with \(p = 0.07\) (top) and \(p = 0.28\) (bottom).}
    \label{fig:roc_er}
\end{figure}
Finally, we observed a slightly improved rate of convergence when the spectral norm of the graph is small, even for larger values of \(\stsz\). This trend is consistent with the conclusion in Theorem \ref{th:combine analysis}, for both the \(d\)-regular expander graph and the Erd\H{o}s-R\'enyi graph. An exact characterization of the region of ($\sigma, \stsz$) where this locally approximate asymptotic convergence rate of
 $1 -\stsz $ for the optimality gap holds, remains an open problem.

\subsubsection*{Comment about scalability of Network-GIANT} 
We report in Figure \ref{fig:Scalablity}  the scalability of Network-GIANT. Two scenarios are considered. In the first case, we fixed \(\sigma\). The connection probability \(p\) is tuned to keep the spectral radius \(\sigma = 0.7128\) at a constant value. It is observed in Figure \ref{subfig:varying p} that even with increasing \(N\) the number of iteration required to reach a value of tolerance \(\log \big(||f(x_{k}) - f(x^{*})|| \big) \approx 10^{-10}\), remains constant, ensuring that Network-GIANT has the same convergence rate across all \(N\). In the second case, we fixed \(p = 0.04\). We reported the number of iterations required by the consensus error, gradient tracking error, and the optimality error of both algorithms to reach a tolerance of  \(10^{-7}\). Increasing \(N\) decreases \(\sigma\) (since $p$ is greater than $\frac{1}{N}$, the threshold for appearance of a giant component) as observed in Table \ref{tab:varying sigma}, thereby increasing the graph connectivity. 
 \vspace{-2mm}
 \begin{table}[htbp]
\centering
\begin{adjustbox}{max width=\linewidth}
\begin{tblr}{
  colspec = {l|cccc},
  row{1} = {font=\bfseries},
  hlines,
}
\(N\) 
  & \(50\) 
  & \(100\) & \(150\)
  & \(200\)  \\

\(\sigma\)
  & $0.973914$  & \(0.939\) & \(0.9389\)
  & $0.9307$ \\

\end{tblr}
\end{adjustbox}
\caption{Values of \(\sigma\) for different \(N\).}
\label{tab:varying sigma}
\end{table}
\vspace{-2mm}
 This explains why in Figure \ref{subfig:fixed p} the number of iterations required by Network-GIANT to reach a tolerance of \(10^{-7}\) decreases as \(N\) increases. We picked \(\eta\) to be \(0.02\) and \(0.065\) for GradTrack and Network-GIANT, respectively.
 \begin{figure}[h]
    \centering
    \begin{subfigure}{0.48\columnwidth}
        \centering
        \includegraphics[scale = 0.12]{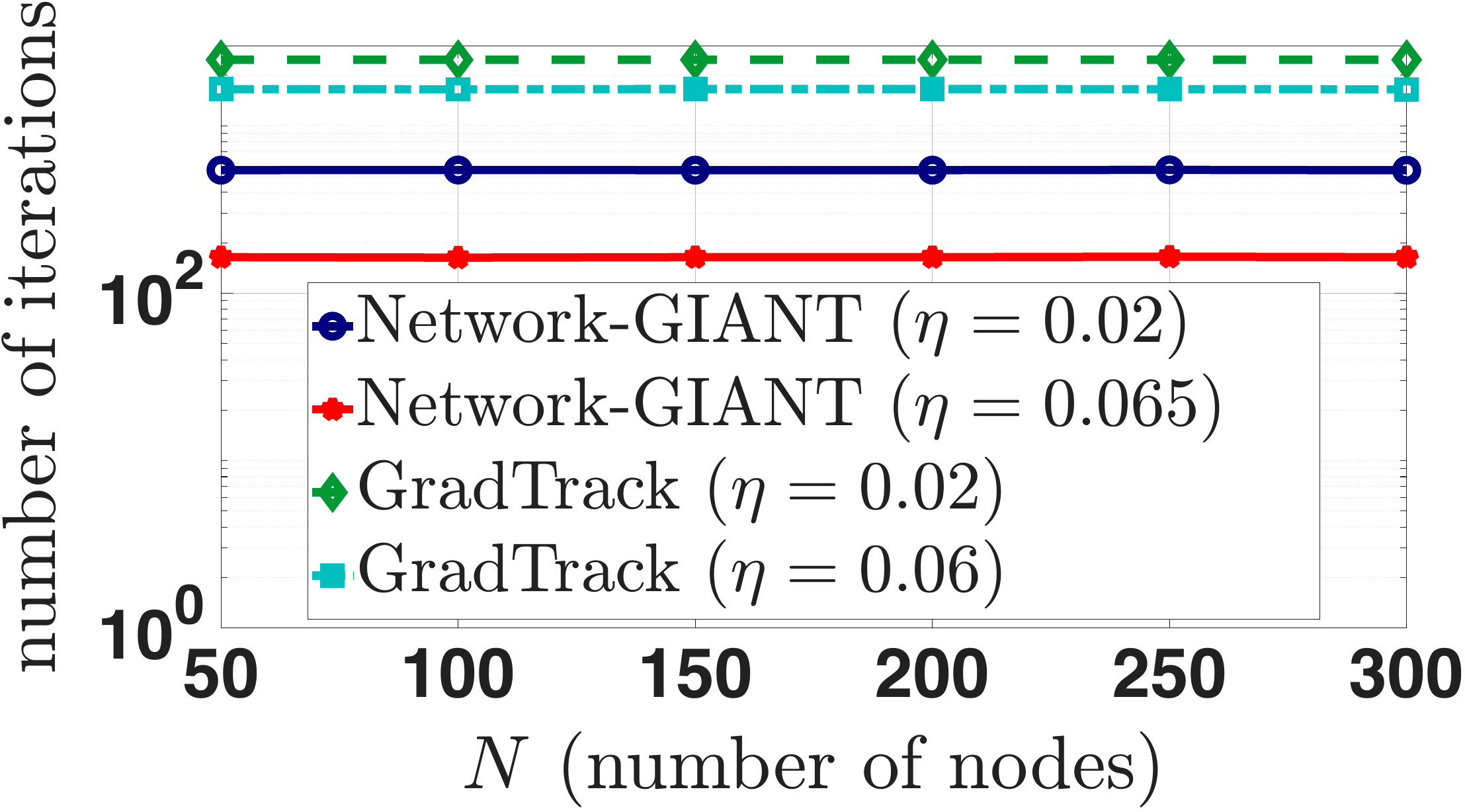}
        \caption{Varying \(p\)}
        \label{subfig:varying p}
    \end{subfigure}\hfill
    \begin{subfigure}{0.48\columnwidth}
        \centering
        \includegraphics[scale = 0.1011]{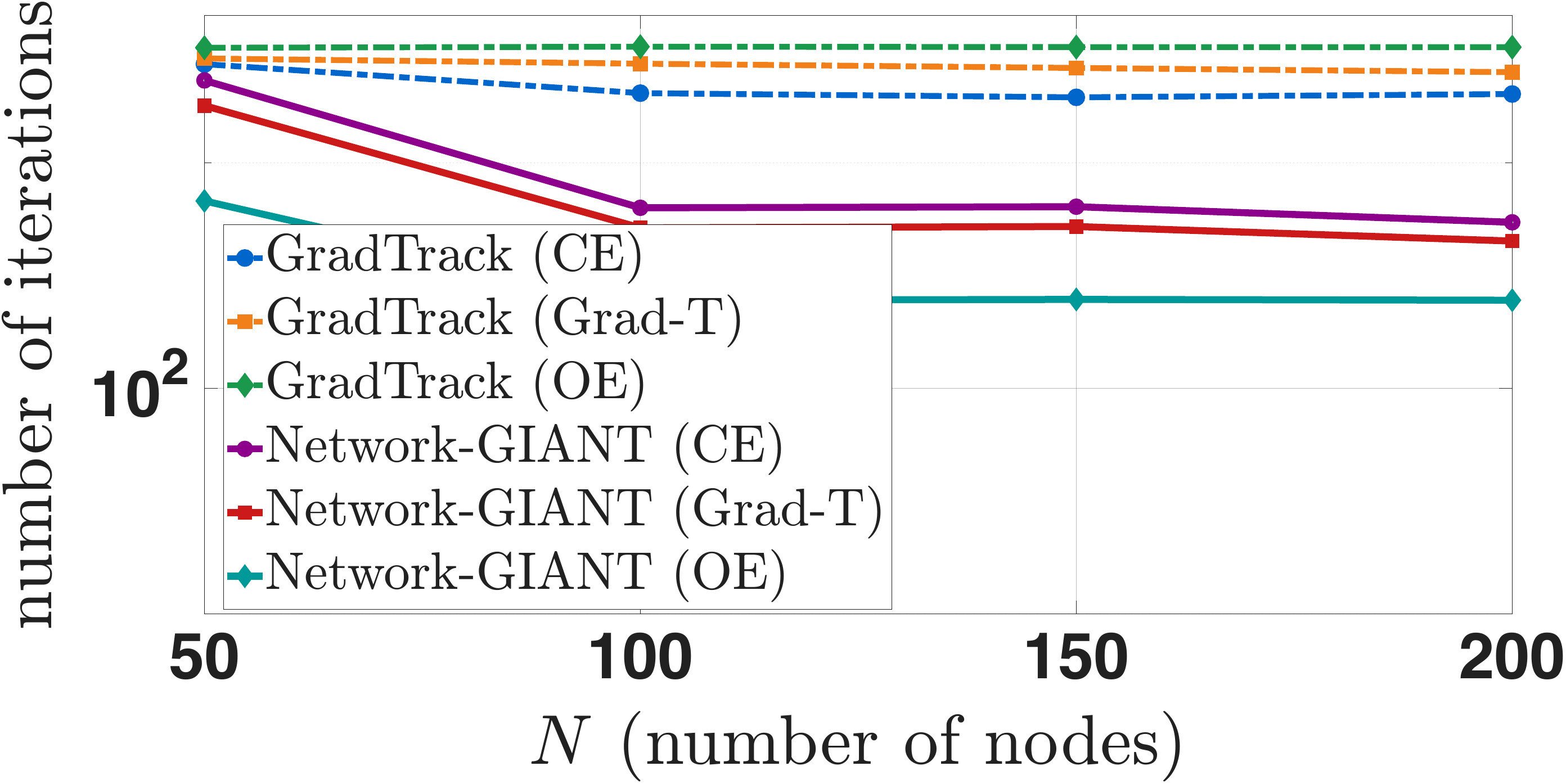}
        \caption{Fixed \(p = 0.04\)}
        \label{subfig:fixed p}
    \end{subfigure}
    \vspace{1mm}
    \caption{Scalability of Network-GIANT for Erd\H{o}s–R\'enyi graph. Here `CE', `Grad-T', and `OE' are short forms for consensus error, gradient-tracking error, and optimality error, respectively.}
    \label{fig:Scalablity}
\end{figure}
\subsubsection*{Comparison with state-of-the-art algorithms}
For comparing \(\ngiant\) with the gradient-tracking based first-order method developed in \cite{li_na_gradient_tracking}, termed as `GradTrack' for convenience, and a Nesterov-accelerated version of the gradient tracking based optimization algorithm 
(ACC-NGD-SC) \cite{ref:GQ-NL-19}, we considered the full CovType dataset, and solved the multinomial logistic classification problem \cite[Chapter 4]{ref-TH-RT-JF-09}. Specifically, we solve the distributed optimization problem \eqref{eq:problem_formulation/optimization}, where local objective function is defined by
\begin{equation*}
    \objfunc_i(\dvar) \Let - \frac{1}{m_i}\sum_{j=1}^{m_i} \sum_{l=1}^{M} \indic{\aset[]{v^i_j=l}} \log\Big(\varphi_l(u^i_j;\dvar) \Big) + \regu \norm{\dvar}^2,
\end{equation*}
\(\regu>0\) is the regularization penalty, and \begin{align*}
    \varphi_l(u^i_j;\dvar) = \begin{cases}
      \frac{\exp \big((\dvar^l)^{\top}u^i_j \big)}{1 + \sum_{l=1}^{M - 1} \exp\big( (\dvar^l)^{\top}u^i_j\big)} \, &\text{for }l=1,\ldots, M - 1,\\
      \frac{1}{1 + \sum_{l=1}^{M - 1} \exp\big( (\dvar^l)^{\top}u^i_j\big)}  \, &\text{for }l=M.
    \end{cases}
\end{align*}
Here the parameters \(\dvar \Let (\dvar^1 \, \dvar^2\, \cdots\, \dvar^{M- 1})^{\top}\) are \((M-1) \times (d+1) \)-dimensional matrices, where \(\dvar^l \in \Rbb^{N+1}\) for each \(l \in \aset[]{1, \ldots, M}\). The full CovType dataset consists of \(581012\) data points with \(54\)-dimensional feature vectors distributed among \(7\) classes. For these experiments, the regularizer \(\regu\) was chosen to be \(0.001\).  For the comparison we considered \(N = 50\). The empirically optimized stepsizes chosen for different algorithms are shown in Table \ref{tab:step-size_comparing}.
\begin{table}[htbp]
\centering
\begin{adjustbox}{max width=\linewidth}
\begin{tblr}{
  colspec = {l|cc|cc},
  row{1} = {font=\bfseries},
  hlines,
}
Algorithms 
  & \(d=4\)   
  & $d = 14$ 
  & \(p = 0.2\)  
  & \(p=0.75\) \\

GradTrack 
  & 0.08 & 0.18 
  & 0.08 & 0.12 \\

\(\ngiant\) 
  & 0.075 & 0.08 
  & 0.075 & 0.08 \\

ACC-NGD-SC 
  & 0.09  & 0.29 
  & 0.2 & 0.3 \\

\end{tblr}
\end{adjustbox}
\caption{Stepsize \(\stsz\) for different algorithms and graph models.}
\label{tab:step-size_comparing}
\end{table}
\begin{figure}[!htbp]
    \centering
    \begin{subfigure}{0.85\columnwidth}
        \includegraphics[scale = 0.13]{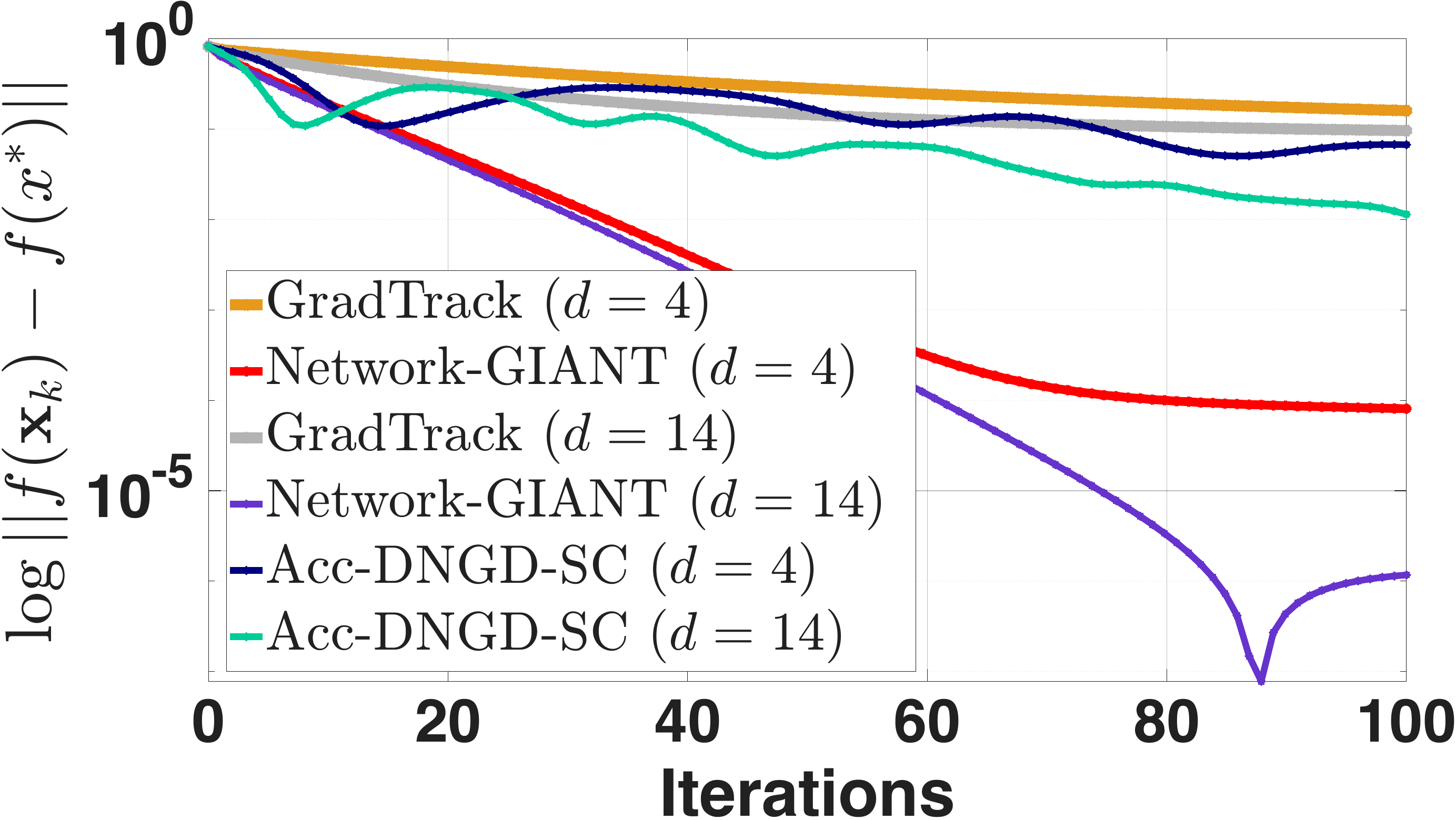}
        \label{subfig:compare_expanderr}
    \end{subfigure}\hfill
    
    \begin{subfigure}{0.85\columnwidth}
        \includegraphics[scale = 0.13]{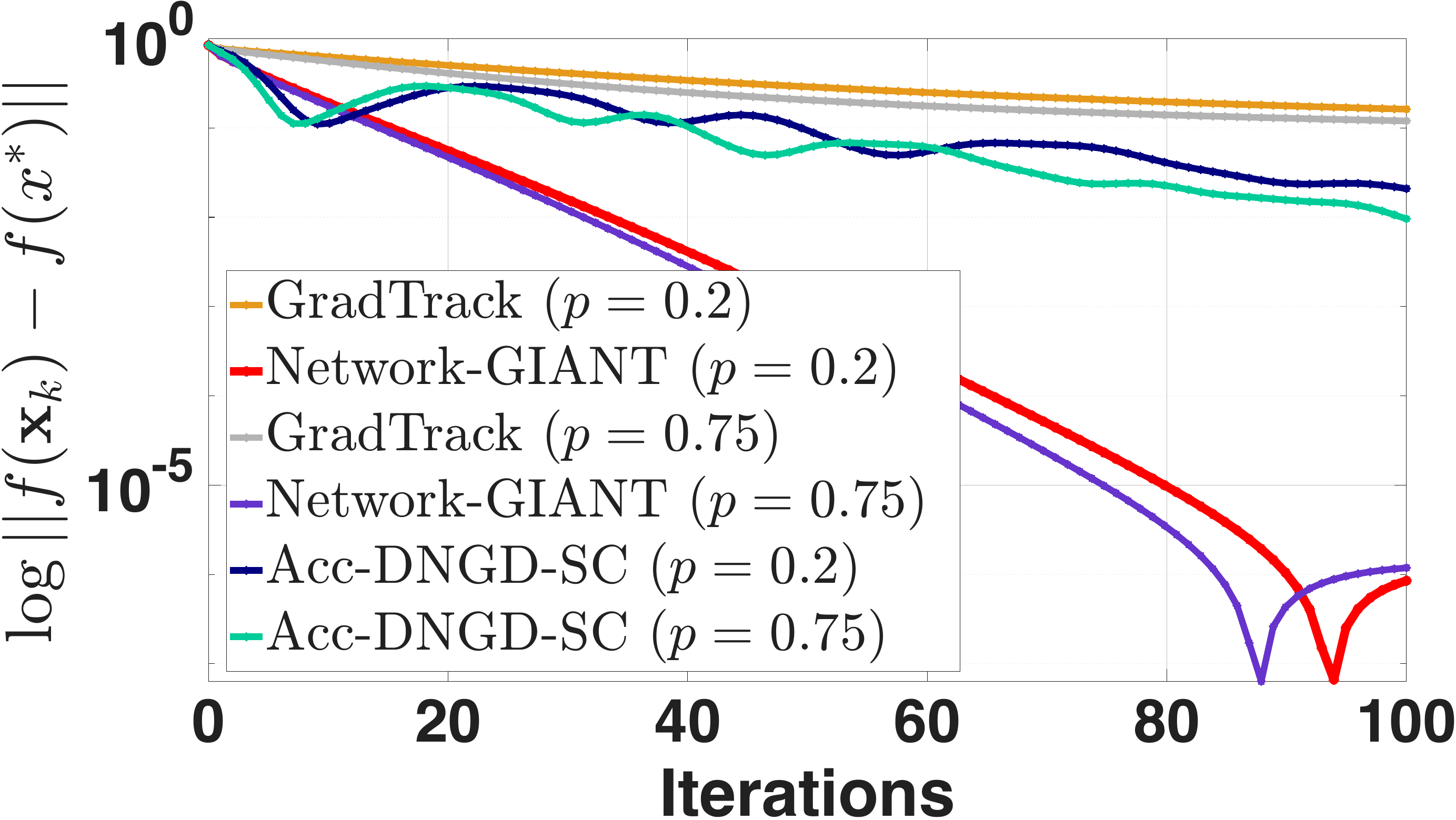}
        \label{subfig:compare_ERr}
    \end{subfigure}
    \caption{Comparison of \(\ngiant\) with GradTrack (\cite{li_na_gradient_tracking}) and ACC-NGD-SC (\cite{ref:GQ-NL-19}) for \(d\)-regular expander graph with \(d=4\) and \(d=14\) (top) and Erd\H{o}s-R\'enyi graph with \(p=0.2\) and \(p=0.75\) (bottom).}
    \label{fig:comparee}
\end{figure}

For ACC-NGD-SC \cite{ref:GQ-NL-19}, we considered a grid of \(\lcrc{0.035}{0.3}\), and the step-sizes, \(\stsz \in \lcrc{0.035}{0.3}\), corresponding the values reported in Table \ref{tab:step-size_comparing} corresponding to the best result obtained. As stated in \cite{ref:GQ-NL-19}, the momentum parameter, denoted here by \(\alpha\) was chosen to be \( \sqrt{\mu \stsz}\) , where \(\mu\) is the strong convexity parameter. For the dataset considered, \(\mu \simeq 0.002\) was computed empirically.\footnote{In our experiments, we observed that a larger \(\alpha\) can be appropriately chosen to obtain a damped response; see Figure \ref{fig:compareer} for more details.} 

It is observed that Network-GIANT comfortably outperforms the ACC-NGD-SC algorithm as well, illustrating that using approximate curvature information can yield significant improvements in convergence rates in distributed optimization. For a densely connected graph, the convergence is more prominent as expected. \footnote{
The dip observed in Figures \ref{fig:comparee} and \ref{fig:compareer} during the early stages of training may be caused by inaccuracies in the Hessian approximation. Since approximate Newton methods rely on estimated curvature information, the resulting descent direction may be overly aggressive, causing the iterates to overshoot the optimal trajectory and leading to a temporary \emph{spike} or \emph{dip} in the empirical loss function.}
\begin{figure}[!htbp]
    \centering

    \begin{subfigure}{\columnwidth}
        \centering
        \begin{subfigure}{0.85\columnwidth}
            \includegraphics[scale = 0.13]{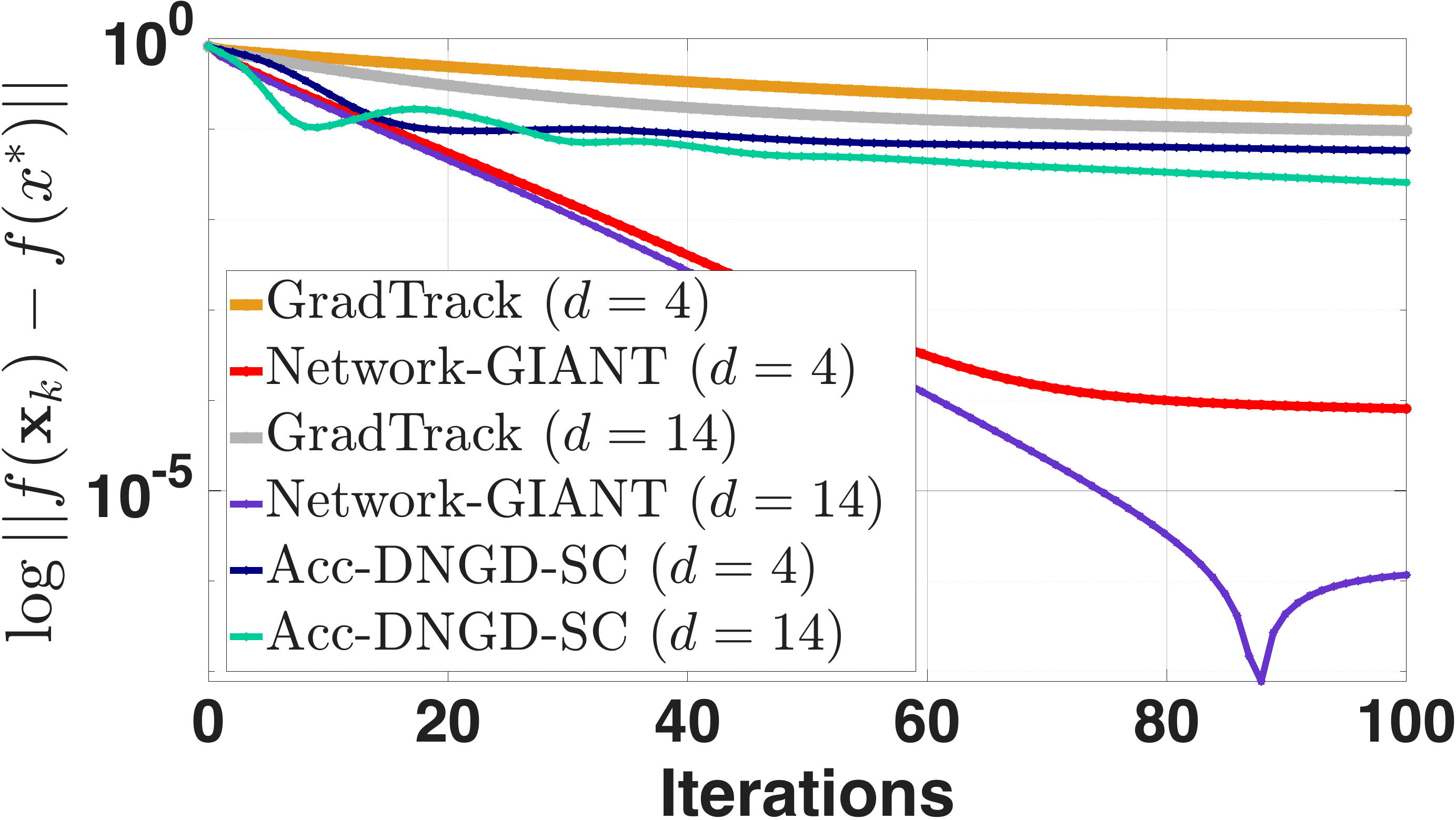}
            \label{subfig:compare_ex_modd}
        \end{subfigure}\hfill
        \begin{subfigure}{0.85\columnwidth}
            \includegraphics[scale = 0.13]{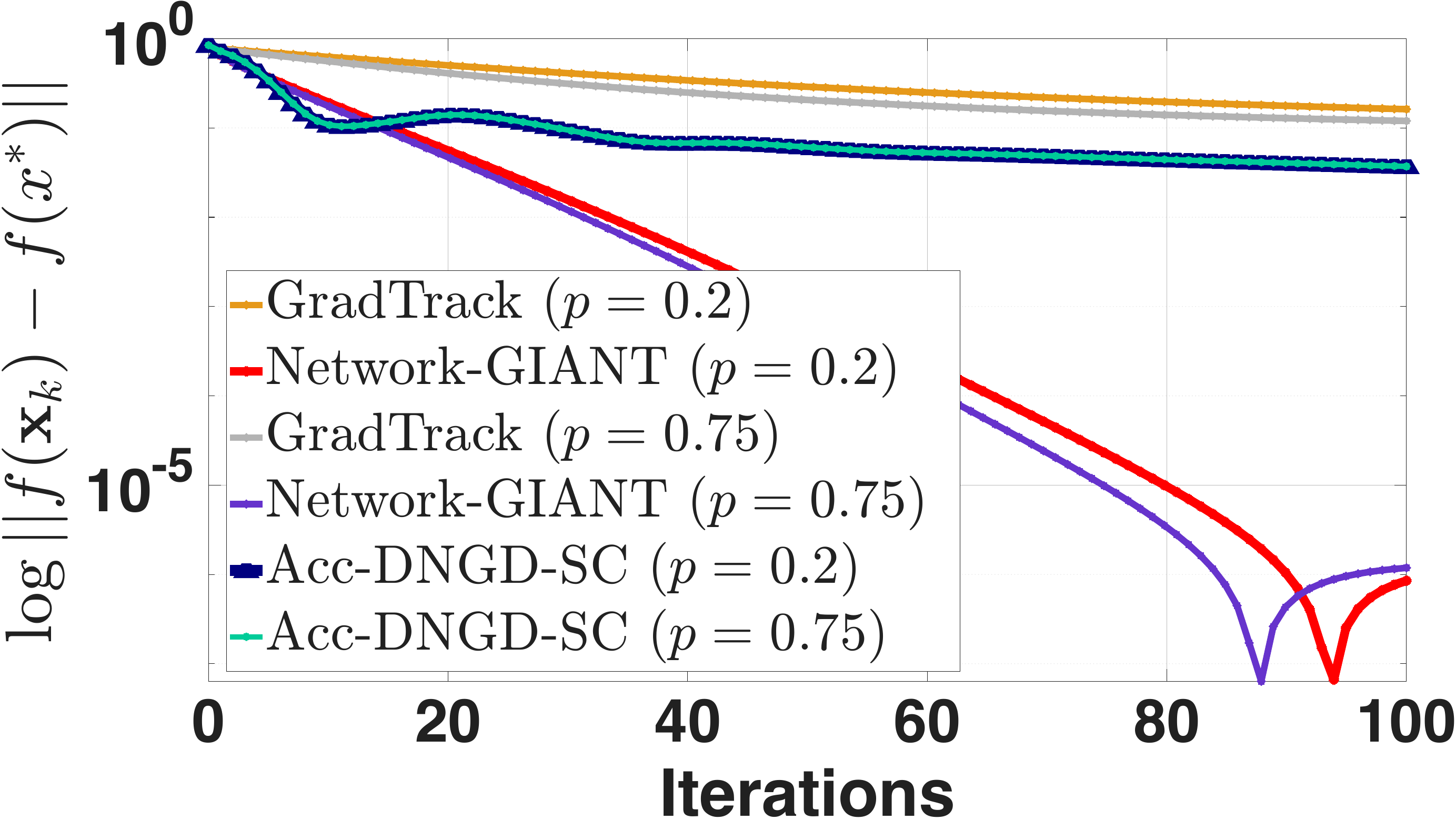}
            \label{subfig:compare_er_modd}
        \end{subfigure}
    \end{subfigure}
    \caption{Comparison when \(\alpha > \sqrt{\mu \eta}\) was chosen, keeping all other parameters identical, and considered \(\alpha = 0.05 \). A damped response for ACC-NGD-SC \cite{ref:GQ-NL-19} is reported.}
    \label{fig:compareer}
\end{figure}
Figure \ref{fig:wallclock} provides a comparison of Network-GIANT with Grad-Track \cite{li_na_gradient_tracking} and the accelerated first-order algorithm Acc-DNGD-SC \cite{ref:GQ-NL-19}, in terms of its wall-clock time, for a  \(d\)-regular expander graph. 
\begin{figure}[!htbp]
    \centering
    \begin{subfigure}{0.48\columnwidth}
        \centering
        \includegraphics[scale = 0.0965]{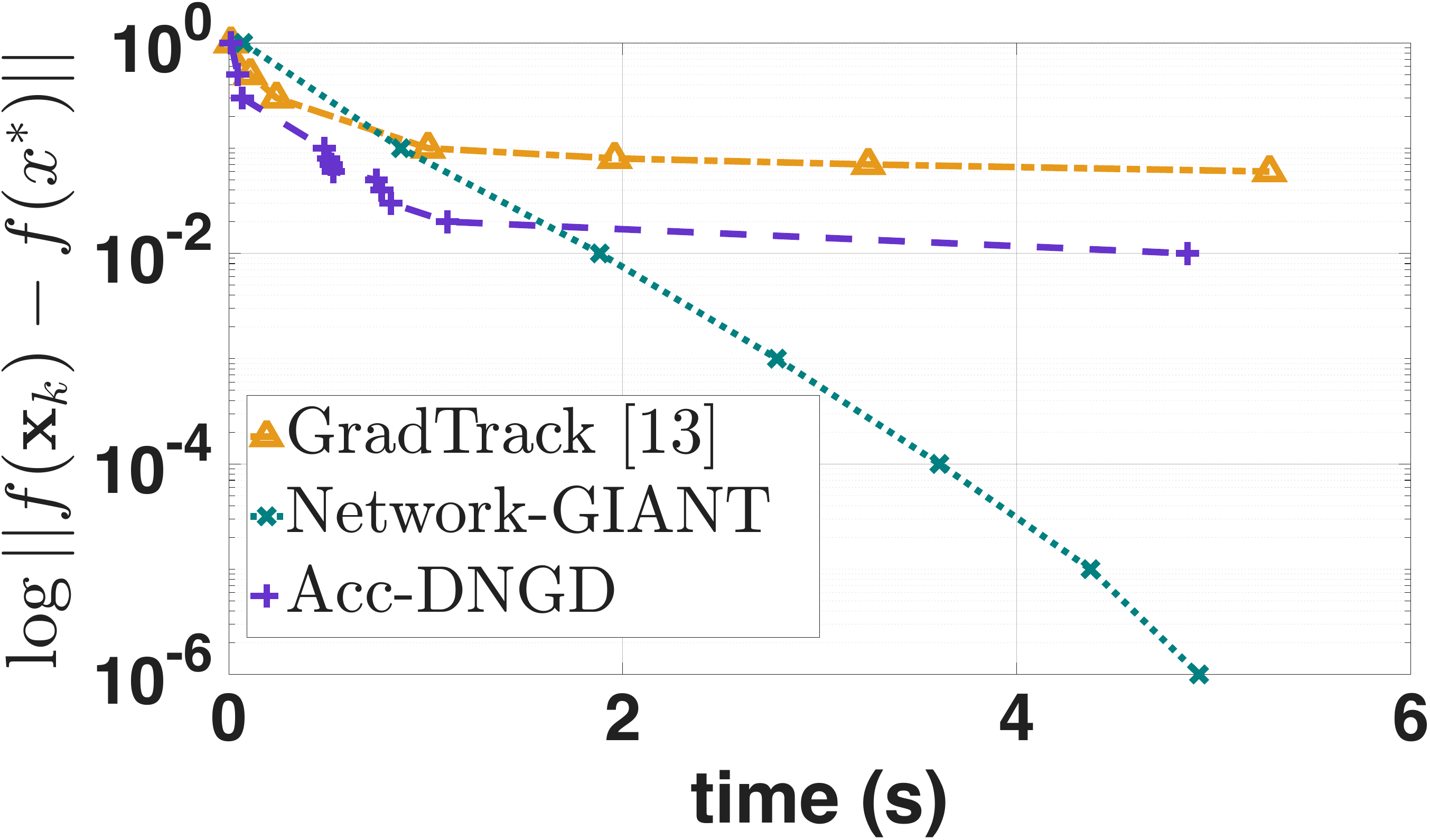}
        \caption{Degree equals to \(14\).}
        \label{subfig:degree14}
    \end{subfigure}\hfill
    \begin{subfigure}{0.48\columnwidth}
        \centering
        \includegraphics[scale = 0.0965]{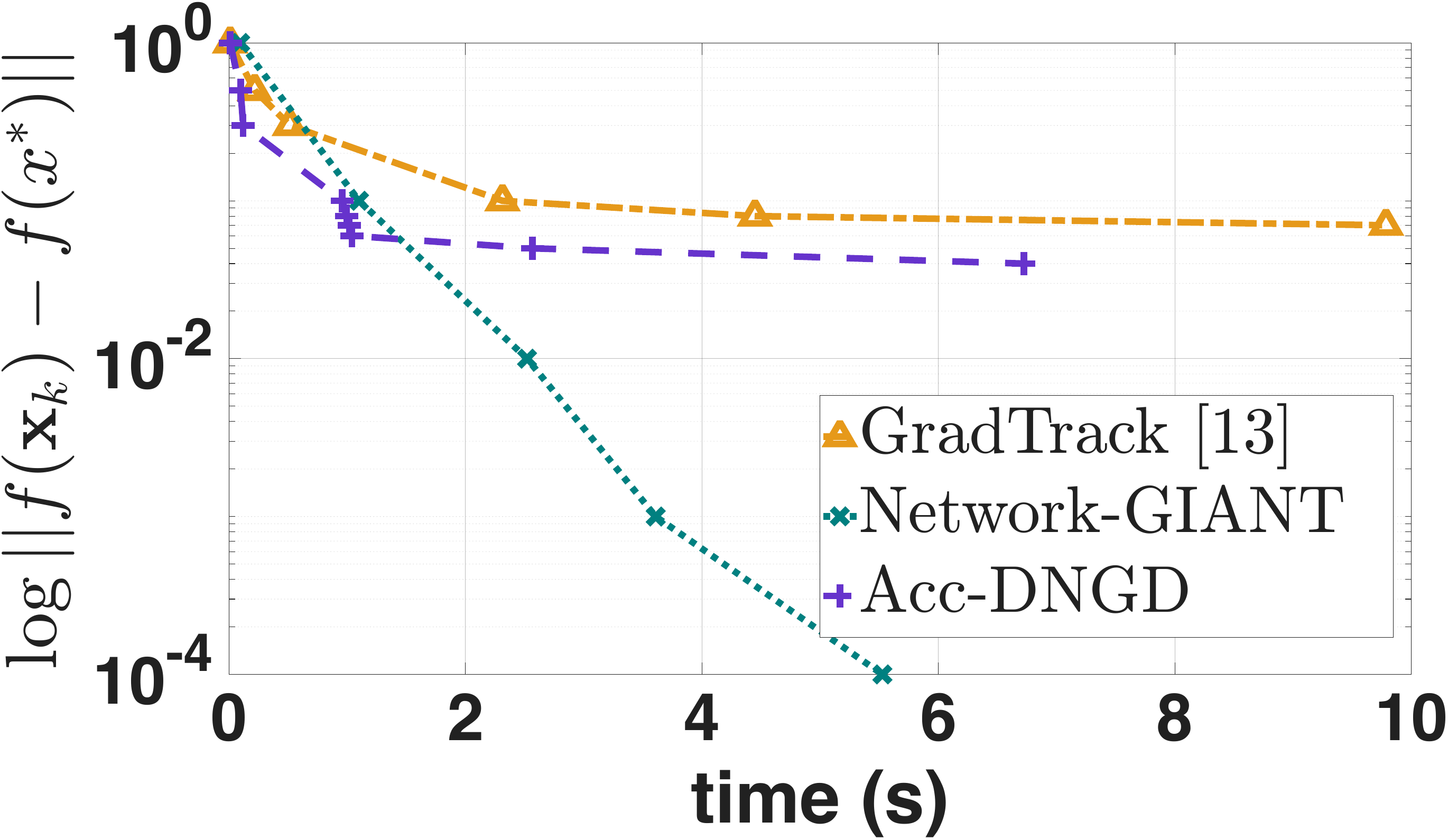}
        \caption{Degree equals to \(4\).}
        \label{subfig:degree4}
    \end{subfigure}
\vspace{1mm}
    \caption{Comparison with respect to the wall-clock time.}
    \label{fig:wallclock}
\end{figure}
This further confirms that the faster convergence rate achieved by the distributed Newton-type step offsets the computational bottleneck due to the local Hessian inverse. For high-dimensional problems, one may employ further approximation techniques, as discussed in Remark \ref{rem:further approx}, to reduce computational efforts, and recent advances in computational hardware are expected to make such implementations more practical in the near future.

\subsubsection*{Performance under heterogeneous data distribution}
We introduce distribution-based label skew heterogeneity into the data, following the partition scheme detailed in \cite{ref:JZ-ZL-BL-JX-SW-SD-CW-22}, where each agent is allocated a proportion of the samples of each label/class according to a Dirichlet distribution with sampling probability \(p_k \sim \mathrm{Dir}(\hetero)\) for each class \(k\). A portion \(p_{k,j}\) of the sampled data from class \(k\) is then allocated to agent \(j\). Here, the parameter \(\hetero\)  controls the degree of skewness, and typically a smaller value \(\hetero<0.1\) denotes highly skewed data distributions, whereas a high value of \(\hetero\) represents a homogeneous data distribution.

We compare Network-GIANT under varying levels of data heterogeneity with Grad-Track \cite{li_na_gradient_tracking} and with Network-GIANT under homogeneous data distributions on d-regular expander graphs of different connectivity; see Figure \ref{fig:hetero_ex} for details. The legend entries without a corresponding \(\hetero\) value represent the error curves obtained under homogeneous data distributions. For Grad-Track, we report only the results for the homogeneous setting, as its behavior does not differ significantly under heterogeneous data distributions.
\begin{figure}[!htbp]
    \centering
    \begin{subfigure}{0.48\columnwidth}
        \centering
        \includegraphics[scale = 0.096]{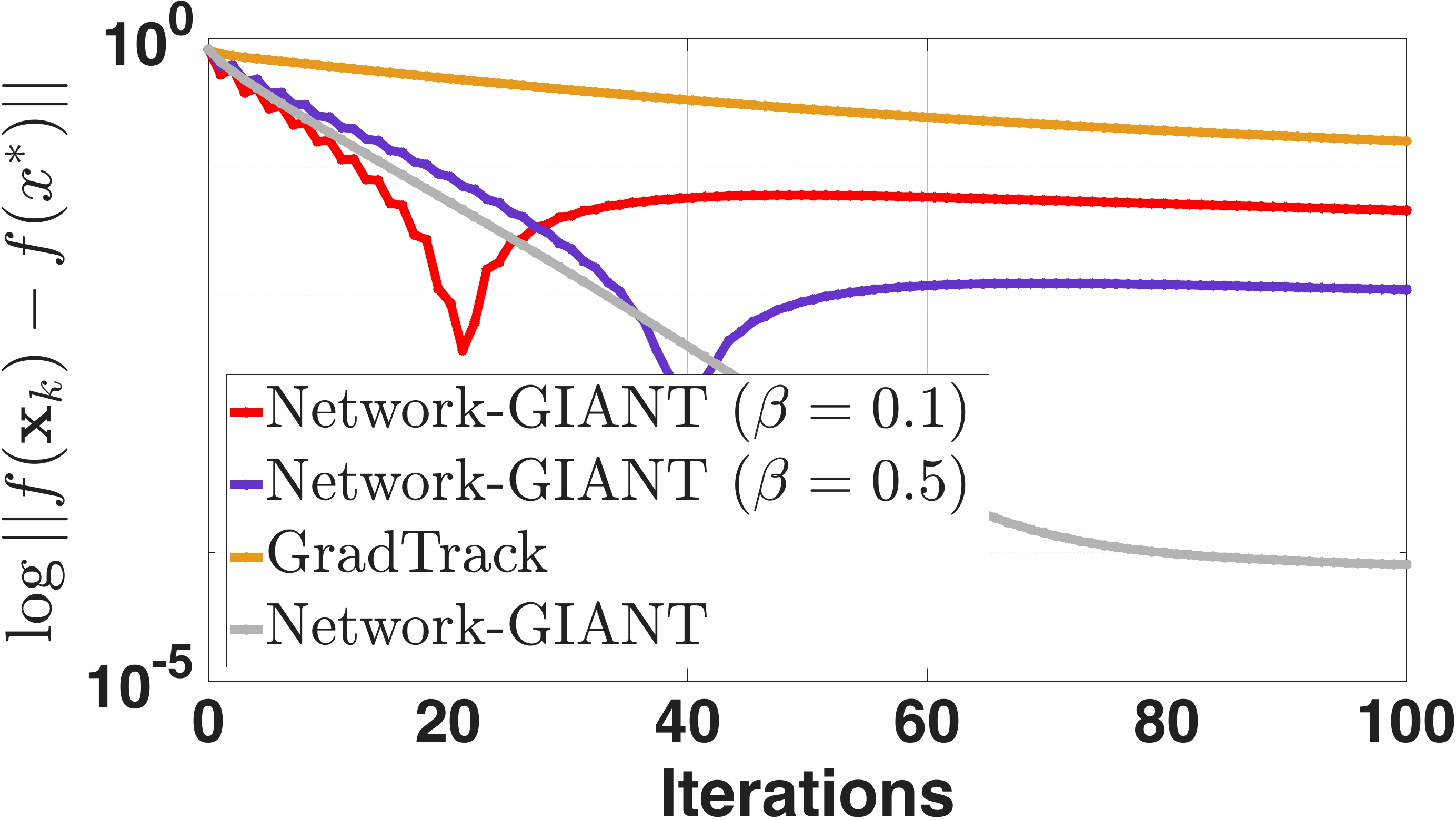}
        \caption{ \(d=4\)}
        \label{subfig:ex_ber_}
    \end{subfigure}\hfill
    \begin{subfigure}{0.48\columnwidth}
        \centering
        \includegraphics[scale = 0.096]{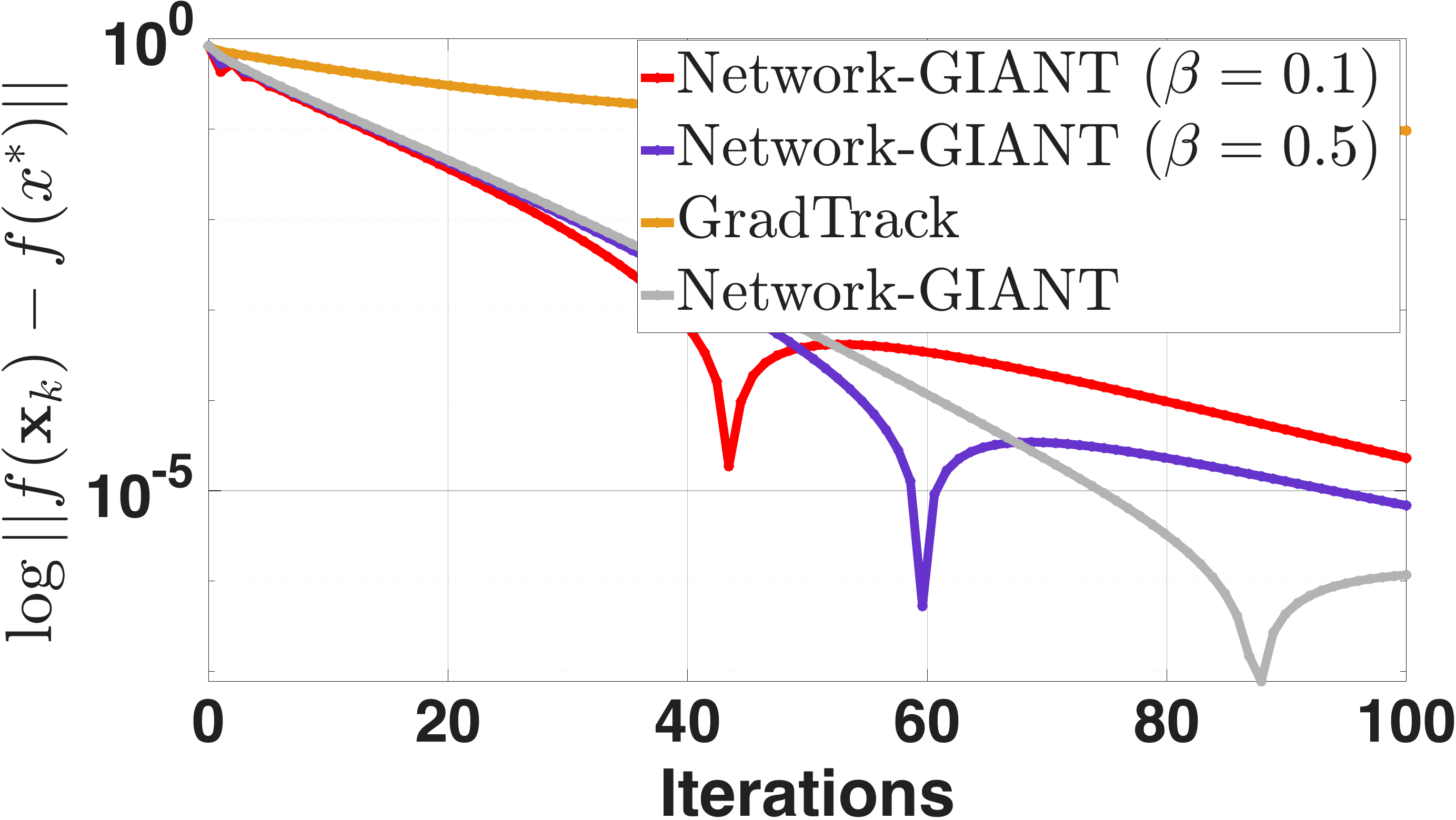}
        \caption{\(d=14\)}
    \end{subfigure}
    \vspace{1mm}
    \caption{Performance under heterogeneous data distribution.}
    \label{fig:hetero_ex}
\end{figure}

As observed in Figure \ref{fig:hetero_hess_error}, heterogeneity introduces bias that degrades the Hessian approximation. While densely connected graphs exhibit some robustness to heterogeneous data distributions, the effect is considerably more pronounced in sparsely connected graphs; see Figures \ref{subfig:ex_ber_}.
\begin{figure}[!htbp]
    \centering
        \includegraphics[scale = 0.125]{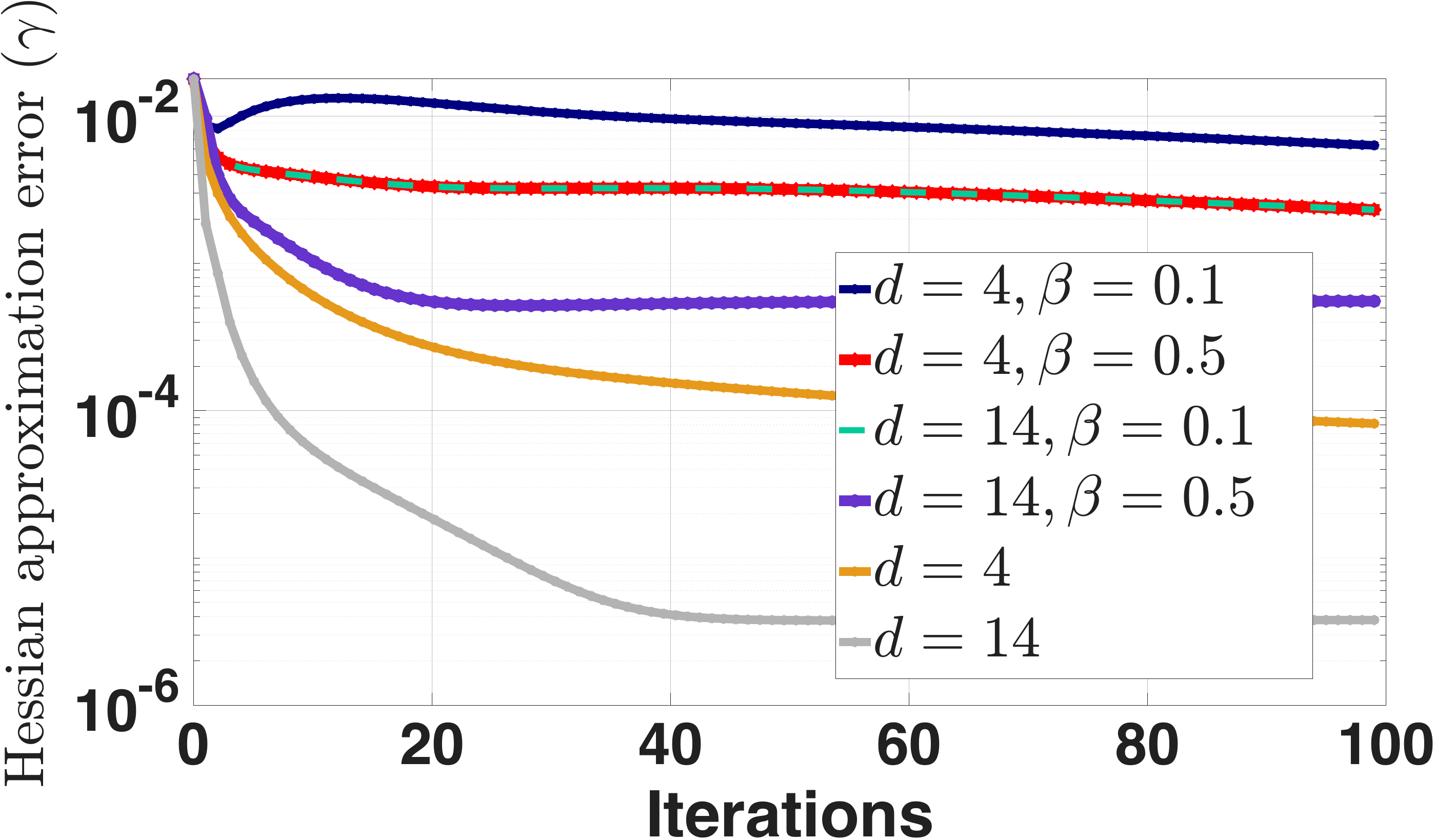}
    \caption{Hessian approximation error for \(d\)-regular expander graph.}
    \label{fig:hetero_hess_error}
\end{figure}

However, we emphasize that Theorems \ref{lin_convergence} and \ref{th:linear rate} assert global linear convergence irrespective of whether the data distribution among agents is homogeneous or heterogeneous. As it can be seen, by choosing a sufficiently small stepsize \(\eta= 0.05\) for the \(4\)-regular expander graph with \(\hetero= 0.1\), Network-GIANT still converges faster than its first-order counterparts, as clearly evidenced in Figure \ref{subfig:ex_ber_}.

	\section{Conclusions}
	This paper focused on the derivation of a number of detailed convergence results of a recently developed approximate Newton-type fully distributed optimization algorithm called Network-GIANT. For strongly convex local cost functions with the step size below an explicit bound, a global linear convergence result is shown, which can be explicitly computed as the spectral radius of a $3\times 3$ matrix, the elements of which depend on the cost function Lipschitz continuity ($L$) and strong convexity ($\mu$)  parameters, the step size ($\eta$) and the spectral norm ($\sigma$) of the underlying consensus matrix. A mixed linear-quadratic inequality-based iterative upper bound is also established for the optimality gap, which indicates that for small step sizes and a negligible Hessian approximation error, Network-GIANT achieves an asymptotic approximate local linear convergence rate of $1-\eta$, which provides a mathematical justification as to why Network-GIANT is empirically seen to achieve a faster convergence rate than its first order gradient-tracking based counterparts. Ongoing and future work will focus on extensions of Network-GIANT to graphs with compressed and noisy communication, incorporating acceleration techniques such as Nesterov-acceleration and heavy-ball type methods, and more general (e.g., directed) graphs. 
    
    \begin{appendices}
        \section{Proof of Theorem 1}\label{appen:th_1}
        We prove three norm inequalities in \eqref{norminequality} individually. 
        \par \noindent
        {\bf Consensus error norm}: 
        First, note that, using the parameter update equation  of 
        \eqref{netgalg_global}, we have \(\mathbf{x}_{k+1} - \mathbf{1} \bar x_{k+1} = W \mathbf{x}_{k} - \mathbf{1} \bar x_{k} - 
        \eta (\mathbb{I} - \frac{1}{N} \mathbf{1}\mathbf{1}^T) \mathbf{y}_k,\) using $W \mathbf{1} =\mathbf{1}$ 
        where 
        $\mathbb{I}$ denotes the identity matrix of order $N \times N$.
        Noting that the 2-norm  $\norm{\mathbb{I}  - \frac{1}{N} \mathbf{1}\mathbf{1}^T}_2 \leq 1$ for all $n > 1$,  $\norm{\mathbf{y}_k} \leq 
        \frac{1}{\mu}\norm{\mathbf{s}_k} $, and $\norm{W \mathbf{x}_{k} - \mathbf{1} \bar x_{k}} \leq \sigma \norm{\mathbf{x}_{k} - \mathbf{1} \bar x_{k}}$, it follows that $ \norm { \mathbf{x}_{k+1} - \mathbf{1} \bar x_{k+1}} \leq  \sigma \norm{\mathbf{x}_{k} - \mathbf{1} \bar x_{k}} + \frac{\eta}{\mu} \norm{\mathbf{s}_k}.$ Finally using equation (ii) of Lemma \ref{lemmaslina}, we have 
\begin{align*}
    & \norm{\mathbf{x}_{k+1} - \mathbf{1} \bar x_{k+1}} \leq (\sigma + \eta \frac{L}{\mu})\norm{\mathbf{x}_{k} - \mathbf{1} \bar x_{k}} 
    \nonumber \\ 
    & + \frac{\eta}{\mu} \norm {\mathbf{s}_{k} - \mathbf{1} g_k} + \eta \frac{L}{\mu} \sqrt{N}  \norm{\bar {x}_{k} - x^{\star}} 
\end{align*}
\par \noindent
{\bf Gradient tracking error norm}: 
        Next, using a result from \cite{li_na_gradient_tracking}, we have \(\norm{\mathbf{s}_{k+1} - \mathbf{1} g_{k+1}} \leq \sigma \norm {\mathbf{s}_{k} - \mathbf{1} g_{k}} + L \norm{\mathbf{x}_{k+1} - 
        \mathbf{x}_{k}}.\)
        Next, by using the identity \(\mathbf{x}_{k+1} -  \mathbf{x}_{k} = (W-\mathbb{I}) \mathbf{x}_{k} - \eta  \mathbf{y}_k 
        = (W-\mathbb{I}) (\mathbf{x}_{k} - \mathbf{1} \bar x_{k}) - \eta \mathbf{y}_k,\)
        and $\norm{W-\mathbb{I}}_2 \leq 2$, we get (after substituting for 
        $\norm{\mathbf{y}_k} \leq \frac{1}{\mu} \norm{\mathbf{s}_k}$, and some rearranging) 
        \begin{align*}
           &  \norm{\mathbf{s}_{k+1} - \mathbf{1} g_{k+1}} \leq (2L + \eta \frac{L^2}{\mu}) \norm{\mathbf{x}_{k} - \mathbf{1} \bar x_{k}} 
            \\
           & + (\sigma+ \eta \frac{L}{\mu})  \norm{\mathbf{s}_{k} - \mathbf{1} g_{k}} + \eta\frac{L^2}{\mu} \sqrt{N}  \norm{\bar {x}_{k} - x^{\star}} 
        \end{align*}
         \par \noindent
        {\bf Optimality gap norm}: 
        To simplify the following analysis, we use $\tilde x_{k} =  \bar x^T_k, \tilde x^{\star} = (x^{\star})^T$, and $\tilde s_i^k = (s_i^k)^T$. 
        Once again, averaging both sides of the state/parameter update equation in \eqref{netgalg_global}, and using the shorthand notation $\tilde H_i^k = \nabla^2 f_i(x_i^k)$ for the local Hessians (note that these matrices are positive definite symmetric), we can write 
        \begin{align*}
         & \tilde{x}_{k+1} - \tilde x^{\star} = \tilde{x}_k - \tilde x^{\star} - \eta \frac{1}{N} \sum_{i=1}^N (\tilde H_i^k)^{-1} \tilde s_i^k \nonumber \\
         & = \tilde{x}_k - \tilde x^{\star} - \eta \frac{1}{N} \sum_{i=1}^N (\tilde H_i^k)^{-1} (\tilde s_i^k - \nabla f(\tilde{x}_k) + \nabla 
         f(\tilde{x}_k)) \end{align*}
         where $\nabla f(\tilde{x}_k) = (\nabla f(\bar{x}_k))^T$.
        Define 
        \begin{align*}   
         & \bar{H}_i^k = (\tilde H_i^k)^{-1} \nabla f(\tilde{x}_k)  \\
         &  = (\tilde H_i^k)^{-1} \int_0^1 \nabla^2 f( \tilde x^{\star} + \lambda(
        \tilde{x}_k - \tilde x^{\star}) ) d \lambda (\tilde{x}_k - \tilde x^{\star})
        \end{align*}
        Consequently, one can write \(\tilde{x}_{k+1} - \tilde x^{\star} = (\mathbb{I} - \eta \frac{\sum \bar{H}_i^k}{N})(\tilde{x}_k - \tilde x^{\star}) -\eta \frac{1}{N} \sum_{i=1}^N (\tilde H_i^k)^{-1} (\tilde s_i^k - \nabla f(\tilde {x}_k))\)
        Using a similar technique as in the Proof of Lemma 2, one can show that $\frac{\mu}{L} \mathbb{I} \leq \bar{H}_i^k \leq \frac{L}{\mu} 
        \mathbb{I}$, and hence 
        $ \left( 1 - \eta \frac{L}{\mu} \right) \mathbb{I} \leq  \mathbb{I} - \eta \frac{\sum \bar{H}_i^k}{N} \leq 
        \left(1 - \eta \frac{\mu}{L} \right)  \mathbb{I} $
        It follows then that if $\eta \leq \frac{\mu}{L}$, we have 
        $ 0 \leq \mathbb{I} - \eta \frac{\sum \bar{H}_i^k}{N}  \leq  \left(1 - \eta \frac{\mu}{L} \right)  \mathbb{I}, $ and finally, $\norm{\mathbb{I} - \eta \frac{\sum \bar{H}_i^k}{N} }_2 \leq (1 - \eta \frac{\mu}{L})$. This allows us to write \(\norm{\tilde{x}_{k+1} - \tilde x^{\star}} \leq \big(1 - \eta \frac{\mu}{L}\big) \norm{\tilde {x}_k - \tilde x^{\star}} + \eta \norm{\frac{1}{N} \sum_{i=1}^N (\tilde H_i^k)^{-1} (\tilde s_i^k - \nabla f(\tilde{x}_k))}  \).

        Denoting $S_a = \eta \norm{\frac{1}{N} \sum_{i=1}^N (\tilde H_i^k)^{-1} (\tilde s_i^k - \nabla f(\tilde{x}_k))}$, using the strong convexity of the local cost functions and the fact that the norm of the transpose of a vector is the same as the norm of the vector, it is easy to show that 
        \begin{align*}
         & S_a \leq \frac{\eta}{\mu} \frac{1}{N} \sum_{i=1}^N \norm{s_i^k - \nabla f(\bar{x}_k))} \\
           & \leq \frac{\eta}{\mu} \frac{1}{N} \sum_{i=1}^N \norm{s_i^k - \nabla f(\mathbf{x}_k) + \nabla f(\mathbf{x}_k)- \nabla f(\bar{x}_k))}  \\
           & \leq \frac{\eta}{\mu} \left( \frac{1}{N} \sum_{i=1}^N \norm{s_i^k - \nabla f(\mathbf{x}_k)} + 
           \norm{\nabla f(\mathbf{x}_k)- \nabla f(\bar{x}_k))} \right) \\
           & \leq \frac{\eta}{\mu} \frac{1}{N} \sum_{i=1}^N \norm{s_i^k - g_k} + \frac{\eta}{\mu} \frac{L}{\sqrt{N}} \norm{\mathbf{x}_k - \mathbf{1} \bar{x}_k}     
        \end{align*}
        where we have used the Lipschitz continuity of the gradients.

        Finally, noting again that $\norm{\tilde{x}_k - \tilde x^{\star}} = \norm{\bar{x}_k - x^{\star}}$, and 
        $$ \frac{1}{N} \sum_{i=1}^N \norm{s_i^k - g_k} \leq \sqrt{ \frac{1}{N} \sum_{i=1}^N {\norm{s_i^k - g_k}}^2 } = 
        \frac{\norm{\mathbf{s}_k - \mathbf{1} g_k}}{\sqrt{N}},$$
        we have \(\sqrt{N} \norm{\bar{x}_{k+1} - x^{\star}}  \leq (1 - \eta \frac{\mu}{L}) \sqrt{N} \norm{\bar{x}_k - x^{\star}} + \frac{\eta L}{\mu} \norm {\mathbf{x}_k - \mathbf{1} \bar{x}_k} + \frac{\eta}{\mu} \norm{\mathbf{s}_k - \mathbf{1} g_k}\).
        Combining the three inequalities for the three error norms, we obtain the inequality in \eqref{norminequality}. 
        \section{Proof of Theorem 2}\label{appen:th_2}
        To analyze the spectral radius of the matrix $G(\eta)$ defined in \eqref{norminequality}, we first recall that since $G(\eta)$ is a positive matrix, its spectral radius is real-valued and positive. Therefore, we need to investigate under what conditions on the step-size $\eta$, this real eigenvalue lies between $0$ and $1$. First, note that at $\eta=0$, the eigenvalues $G(0)$ are $\sigma, \sigma$, and $1$. Clearly, the spectral radius $\rho(G(\eta)) =1$ at $\eta=0$. 
        From continuity of eigenvalues as a function of the elements of the matrix, we know that $\rho(G(\eta))$ is a continuous function of $\eta$.  
        \par\noindent
        Next, we evaluate the derivative of $\rho(G(\eta))$ with respect to $\eta$ at $\eta=0$. Define $\alpha = 1-\eta \frac{\mu}{L}$, and $\beta = 
        \sigma + \eta \frac{L}{\mu}$, and $\delta = \eta \frac{L}{\mu}$ for simplifications. By analyzing the characteristic polynomial of $G(\eta)$, one can write that an eigenvalue 
        $\lambda$ of $G(\eta)$ satisfies the following equation \((\lambda-\alpha)(\lambda-\beta)^2 - (\lambda-\alpha)(2 \delta+ \delta^2)  - 2\delta^2 (\lambda-\beta)  - 2 \delta^3 - 2\delta^2 = 0\).
        Taking the derivative with respect to $\eta$, and substituting $\lambda=1$, and $\delta = 0$ at $\eta=0$, after some algebra one can get \(\big(\frac{\partial \lambda}{\partial \eta} + \frac{\mu}{L}\big) (1 - \sigma)^2 =0.\)
        Since $\sigma < 1$, this implies that $\frac{\partial \lambda}{\partial \eta} < 0$ at $\eta=0$. Therefore, from continuity of eigenvalues as a function of the matrix elements, it follows that the largest eigenvalue remains real and decreases as $\eta$ increases slightly from $0$.  Since the other two eigenvalues are $\sigma < 1$ at $\eta=0$, we obtain
        $\rho(G(\eta)) < 1$ for a sufficiently small $\eta > 0$. 
        \par \noindent
        Next, we check for what values of $\eta$, we have a solution $\lambda=1$. Clearly, one such value, as we saw earlier, is $\eta=0$. 
        By substituting $\lambda=1$ (and consequently, $(\lambda-\alpha) = \eta \frac{\mu}{L}$, $(\lambda-\beta) = 1 - \sigma -\eta \frac{L}{\mu}$) in the characteristic polynomial, and discarding the solution $\eta=0$, we get 
        (after some algebraic manipulations), surprisingly,  a single solution 
        $\bar \eta = \frac{(1-\sigma) \frac{\mu}{L}}{2(2-\sigma) (1 + (\frac{L}{\mu})^2})$. Since at $\eta =0$, the eigenvalues are $1,\sigma, \sigma$, and the spectral radius is less than $1$ in the immediate vicinity of $\eta >0$, and there is only one other value $\eta=\bar \eta$, where an eigenvalue can be $1$, we conclude that the magnitudes of all the eigenvalues (and hence the spectral radius) are less than $1$ for $0 < \eta < \bar \eta$. By noting that $\bar \eta < \frac{\mu}{L}$ (thus satisfying the global convergence condition on the step size for the damped Newton method), and substituting $\kappa = \frac{L}{\mu}$, we complete the proof of Theorem 2.

        \section{Proof of Theorem 3}\label{appen:th_3}
        Similar to the {\em optimality error norm} proof of Theorem 1, we work with the transposed notations $\tilde{x}_k, \tilde x^{\star}$, 
        $\tilde s_i^k$, and $\nabla f(\tilde{x}_k)$. We adapt a proof technique similar to that presented in \cite{JMLR_hazan}.
        As before, we can write 
        \begin{align*}
          &   \tilde{x}_{k+1} - \tilde x^{\star} \\
          & = \tilde{x}_k - \tilde x^{\star} - \eta \frac{1}{N} \sum_{i=1}^N (\tilde H_i^k)^{-1} (\tilde s_i^k - \nabla f(\tilde{x}_k) + \nabla 
         f(\tilde{x}_k)) \\
         &=  \tilde{x}_k - \tilde x^{\star}  - \underbrace{\eta \frac{1}{N} \sum_{i=1}^N (\tilde H_i^k)^{-1} (\tilde s_i^k - \nabla f(\tilde{x}_k))}_{S_1} \\
         & \;\;\;\;\;\;\;\;\;\;\;\;\;\;\;\;\;\;\;\;\;\;\;\;\;\;\;\;\;\; - \underbrace{\eta \frac{1}{N} \sum_{i=1}^N (\tilde H_i^k)^{-1}  \nabla f(\tilde{x}_k)) }_{S_2}
        \end{align*}
        We note that $S_2$ can be written as 
        $S_2 = \eta (H_{app}(\mathbf{x_k}))^{-1} \int_0^1 \nabla^2 f(\tilde x^{\star} + \lambda (\tilde{x}_k - \tilde x^{\star}) ) d \lambda (\tilde{x}_k - 
        \tilde x^{\star})$
        This allows us to write
        \begin{align*}
        &\tilde{x}_{k+1} - \tilde x^{\star}  = \Big( \mathbb{I} - \eta (H_{app}(\mathbf{x_k}))^{-1} \cdots \\& \hspace{1cm} \cdots \underbrace{\int_0^1\nabla^2 f(\tilde x^{\star} + \lambda (\tilde{x}_k - \tilde x^{\star}) ) d \lambda}_{\delta(\tilde x_k)} \Big) (\tilde{x}_k - 
        \tilde x^{\star}) + S_1    \\
        & = (1 - \eta) (\tilde{x}_k -\tilde x^{\star}) + \eta (H_{app}(\mathbf{x_k}))^{-1} \left[ H_{app}(\mathbf{x_k}) -\delta(\tilde x_k) \right] \\
        & \qquad \qquad \qquad \qquad \qquad\qquad\qquad\qquad \times (\tilde{x}_k -\tilde x^{\star}) + S_1
        \end{align*}
        It follows then from strong convexity and  $\eta < 1$
        \begin{align}
        & \norm{\tilde{x}_{k+1} - \tilde x^{\star}} \leq \left((1-\eta) +\frac{\eta}{\mu} \norm{H_{app}(\mathbf{x_k}) -\delta(\tilde x_k) } \right) 
          \nonumber \\
         & \qquad \qquad \qquad \qquad \times \norm{\tilde{x}_k -\tilde x^{\star}} + \norm{S_1} 
         \label{proof3eq1}
         \end{align}
         One can show  (recalling that $\norm{v^T} = \norm{v}$ for a vector $v$)
         \begin{align*}
         & \norm{H_{app}(\mathbf{x_k}) -\delta(\tilde x_k)} \\ 
         & \leq \norm{H_{app}(\mathbf{x_k}) - H_{tr}(\mathbf{x_k})}  
          + \norm{H_{tr}(\mathbf{x_k}) - \delta(\tilde x_k)} \\
         & \leq \gamma + \int_0^1 \norm{ \nabla^2 f(\mathbf{x}_k) - \nabla^2 f(\tilde x_k)} d \lambda \\
          & \qquad \qquad + \int_0^1 \norm{\nabla^2 f(\tilde x_k)  - \nabla^2 f(\tilde x^* + \lambda(\tilde x_k - \tilde x^*))} d \lambda \\
          & \leq \gamma + \frac{\bar L}{\sqrt{N}} \norm{\mathbf{x}_{k} - \mathbf{1} \bar x_{k}} + \frac{\bar L}{2} \norm{\bar x_k - x^{\star}}  
         \end{align*}
         where the last term in the last inequality is due to the Lipschitz continuity of the global Hessian. Using the above result in \eqref{proof3eq1}, we obtain (observing that since $\gamma < \mu$, and $\eta < 1$, $\eta(1 - \frac{\gamma}{\mu}) < 1$)
         \begin{align*}
          &  \norm{\bar{x}_{k+1} -  x^{\star}} \leq  \left(1- \eta(1 -\frac{\gamma}{\mu})\right) \norm{\bar {x}_{k} - x^{\star}} \\
        & + \frac{\eta \bar L}{\mu \sqrt{N}} \norm{\mathbf{x}_{k} - \mathbf{1} \bar x_{k}} \norm{\bar {x}_{k} - x^{\star}} 
         + \frac{\eta \bar L}{2 \mu} \norm{\bar {x}_{k} - x^{\star}}^2   + \norm{S_1} .
         \end{align*}
         Finally, noting that 
         $\norm{S_1}$ is the same as the quantity $S_a$ defined in the Proof of Theorem 1, we can obtain the final expression for the upper bound on $\norm{\bar{x}_{k+1} -  x^{\star}}$ given in \eqref{lin_quad_rate}. \\

        \vspace*{-0.5cm}
         \section{Proof of Theorem \ref{th:linear-quadratic rate}}\label{appen:proof_linear-quad-rate} 
         The proof of Theorem \ref{th:linear-quadratic rate} proceeds along the following steps:\\
         \noindent \textbf{Step 1: } 
         First, observe that for a sufficiently small \(\stsz\) picked according to Theorem \ref{lin_convergence}, the iterates corresponding to Network-GIANT converge at a linear rate.
         This asserts that there exist \(\constt_1, \constt_2>0\) and some \(\rrate_1, \rrate_2>0\) such that \(\norm{\mathbf{x}_{k} - \mathbf{1} \bar x_{k}} \leqslant \constt_1 e^{-\rrate_1 k}\) and \(\norm{\mathbf{s}_{k} - \mathbf{1} g_{k}} \le \constt_2 e^{-\rrate_2 k}\). Note that for \(\rrate_1, \rrate_2\) one can choose a sufficiently large \(\K \in \Nz\) such that \(\constt_1 e^{-\rrate_1 k} \le \ol{\constt}_1 \optgaperr_k^2 \) and \(\constt_2 e^{-\rrate_2 k} \le \ol{\constt}_2 \optgaperr_k^2 \) for each \(k \ge \ol{K}\), which implies \[\constt_2 e^{-\rrate_2 k} + L \constt_1 e^{-\rrate_1 k} \le \underbrace{(\ol{\constt}_2 + L \ol{\constt}_1)}_{\teL \ol{\constt}}\optgaperr_k^2 \text{ for each }k \ge \ol{K}.\]
         Putting these arguments together, \eqref{lin_quad_rate} can be written as
         \begin{align*}
             &\optgaperr_{k+1} \leqslant \Bigg( 1 - \eta \bigg(1 - \frac{\gamma}{\mu} \bigg) + \frac{\eta \ol{L}}{\mu \sqrt{N}} \constt_1 e^{-
             \rrate_1 k}\Bigg)\optgaperr_k \\ &
             \hspace{3cm} + \Bigg(\frac{\eta\overline{L}}{2 \mu} + \frac{\eta \ol{\constt}}{\mu \sqrt{N}} \Bigg)\optgaperr_k^2 \text{ for each }k\ge \ol{K}.
         \end{align*}

         \noindent \textbf{Step 2:} We now appeal to Theorem \ref{lin_convergence}, which states that for a sufficiently small stepsize \(\stsz\), the sequence \(\big(\optgaperr_k \big)_k\) is convergent sequence with limit point \(\ell = 0\). Therefore, we conclude that for every \(\epsilon>0\), there exists a \(K_{\epsilon} \in \Nz\) such that  whenever \(k > K_{\epsilon}> \ol{K}\), \(\optgaperr_k < \frac{ 2\mu \sqrt{N}}{\eta \big( \ol{L}\sqrt{N} + 2 \ol{\constt}\big)}\epsilon .\) 
         This immediately implies that for every \(k > K_{\epsilon}\)
        \begin{align*}
            &1 - \eta \Big(1 - \frac{\gamma}{\mu} \Big) + \frac{\eta \ol{L}}{\mu \sqrt{N}} \constt_1 e^{- \rrate_1 k} +\bigg( \frac{\eta\overline{L}}{2 \mu} + \frac{\eta \ol{\constt}}{\mu \sqrt{N}} \bigg)\optgaperr_k \\
            & < 1 - \eta \Big(1 - \frac{\gamma}{\mu} \Big) + \underbrace{\frac{\eta \ol{L}}{\mu \sqrt{N}} \constt_1 e^{- \rrate_1 k} + \epsilon}_{\teL \ol{\epsilon}_k}, 
        \end{align*}
        where the sequence converges \((\ol{\epsilon_k})_{k \in \Nz} \xrightarrow[k \ra +\infty]{}\epsilon\). For ease of notations, we define \(\xi_k \Let 1 - \eta \big(1 - \frac{\gamma}{\mu} \big) + \ol{\epsilon}_k\) and  \(\xi_k \xrightarrow[k \ra +\infty]{} \xi \Let 1 - \eta \big(1 - \frac{\gamma}{\mu} \big) + \epsilon\).
        Therefore, for each \(k \in \Nz\), \(\optgaperr_{k+1} \leqslant \xi_k \optgaperr_k \leqslant \Big(\prod_{k \in \Nz} \xi_k \Big) \optgaperr_0\). Observe that \(\log \xi_k \xrightarrow[k \ra + \infty]{} \log \xi \) and \(\frac{1}{k+1} \log \optgaperr_0 \xrightarrow[ k \ra +\infty]{}0\), which implies that \(\frac{1}{k+1} \sum_{k \in \Nz} \log \xi_k \xrightarrow[k \ra +\infty]{}\log \xi\) for every \(\epsilon>0\).  To see why this is true, we compute the upper bound 
        \begin{align*}
         \norm{\frac{1}{k+1} \sum_{k \in \Nz} \log \xi_k - \log \xi} 
         &\le \frac{1}{k+1} \sum_{k \in \Nz} \abs{\log \xi_k - \log \xi} \\&\xrightarrow[k \ra +\infty]{} 0,
        \end{align*}
        which allows us to write
        \begin{align*}
            \liminf_{k \ra +\infty}\bigg(- & \frac{1}{k+1} \log \optgaperr_{k+1}\bigg) \ge \liminf_{k \ra +\infty} \bigg(- \frac{1}{k+1} \sum_{k \in \Nz} \log \xi_k \bigg)\\
            & \xrightarrow[k \ra +\infty]{} - \log \bigg(1 - \eta \Big(1 - \frac{\gamma}{\mu} \Big) + \epsilon \bigg)
        \end{align*}
        for every sufficiently small \(\epsilon>0\). 
        
        As an immediate consequence of \eqref{eq:geo_rate}, we have
        \begin{align*}
            -\limsup_{k \ra + \infty} \bigg(\frac{\log \optgaperr_k}{k+1} \bigg) &= \liminf_{k \ra + \infty} \bigg(-\frac{\log \optgaperr_k}{k+1} \bigg) \optgaperr_{k+1}\bigg) \\
            & \ge - \log \bigg(1 - \eta \Big(1 - \frac{\gamma}{\mu} \Big) \bigg),
        \end{align*}
        which implies that \[\limsup_{k \ra + \infty} \bigg(\frac{\log \optgaperr_k}{k+1} \bigg) \le \log \bigg(1 - \eta \Big(1 - \frac{\gamma}{\mu} \Big) \bigg). \]
        
        This completes the proof of Theorem \ref{th:linear-quadratic rate}.
    
\end{appendices}    

	\bibliographystyle{IEEEtran}
	\bibliography{IEEEabrv,utils/refs1}

\vspace*{-1.25cm}
    \begin{IEEEbiography}[{\includegraphics[width=1in,height=1.25in,clip,keepaspectratio]{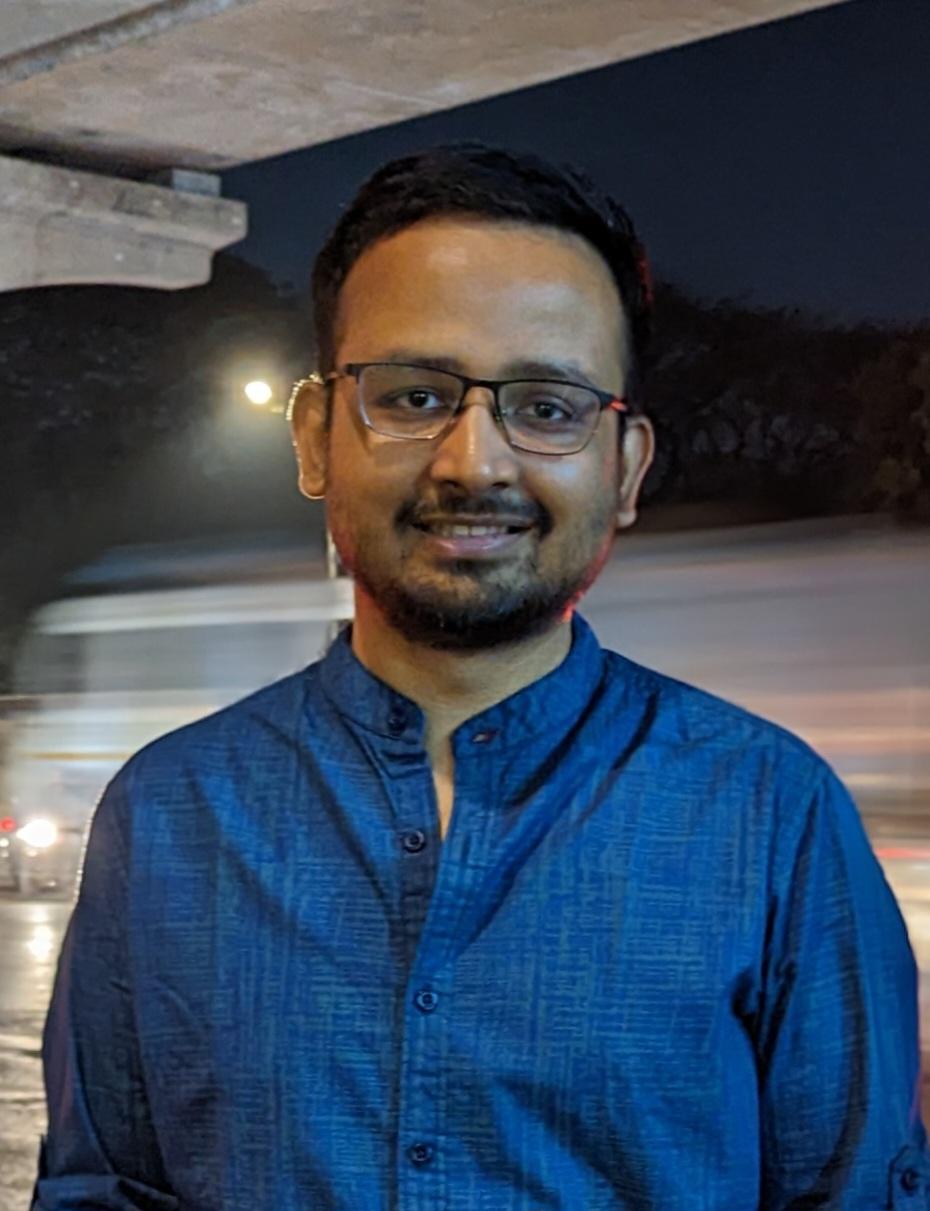}}]{Souvik Das} is a postdoctoral researcher in the Department of Electrical Engineering at Uppsala University, Sweden. He received his doctoral degree from the Centre for Systems and Control at IIT Bombay in 2025. His research interests are broadly in the field of optimization and optimization-based control, and security and privacy of cyber-physical systems.
\end{IEEEbiography}

\vspace*{-1.5cm}
\begin{IEEEbiography}[{\includegraphics[width=1in,height=1.25in,clip,keepaspectratio]{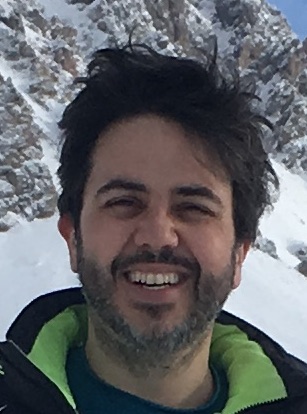}}]{Luca Schenato} (Fellow, IEEE) received the
Dr.Eng. degree in electrical engineering from
the University of Padova, Padova, Italy, in 1999,
and the Ph.D. degree in electrical engineering
and computer sciences from the University of
California (UC), Berkeley, Berkeley, CA, USA, in
2003. He was a Postdoctoral Researcher in 2004
and Visiting Professor during 2013–2014 with
UC Berkeley. He is currently a Full Professor
with the Information Engineering Department,
University of Padova. His interests include networked control systems, multi-agent systems, wireless sensor networks, distributed optimisation, and synthetic biology. Luca Schenato has been awarded the 2004 Researchers Mobility Fellowship by the Italian Ministry of Education, University and Research (MIUR), the 2006 Eli Jury Award in U.C. Berkeley and the EUCA European Control Award in 2014, and IEEE Fellow in 2017. He served as Associate Editor for IEEE Trans. on Automatic Control from 2010 to 2014, and he is currently Senior Editor for IEEE Trans. on Control of Network Systems and Associate Editor for Automatica.
\end{IEEEbiography}
\vspace*{-1cm}
 \begin{IEEEbiography}[{\includegraphics[width=1in,height=1.25in,clip,keepaspectratio]{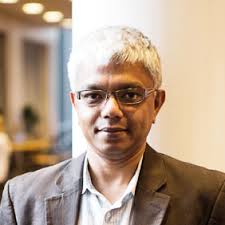}}]{Subhrakanti Dey} (Fellow, IEEE) received the Ph.D. degree from the Department of Systems Engineering, Research School of Information Sciences and Engineering, Australian National University, Canberra, in 1996. 
He is currently a Professor and Head of the Signals and Systems division in the Dept of Electrical Engineering at Uppsala University, Sweden. He has also held professorial positions at NUI Maynooth, Ireland, and the University of Melbourne, Australia. His current research interests include networked control systems, distributed machine learning and optimization, and detection and estimation theory for wireless sensor networks. He is currently a Senior Editor for IEEE Transactions of Control of Network Systems and IEEE Transactions on Signal and Information Processing over Networks, and an Associate Editor for Automatica. He also served as a Senior Editor for IEEE Control Systems Letters until 2025.

\end{IEEEbiography}
\end{document}